\def\section{\@startsection{section}{1}%
  \z@{1.1\linespacing\@plus\linespacing}{.8\linespacing}%
  {\normalfont\Large\scshape\centering}}
\theoremstyle{plain}
\newtheorem*{conj*}{Root Groups Conjecture}
\newtheorem*{thm1.2}{(1.2) Theorem}
\newtheorem*{thm1.3}{(1.3) Theorem}
\newtheorem*{thm1.4}{(1.4) Theorem}
\newtheorem*{prop*}{Proposition}
\newtheorem{prop}{Proposition}[section]
\newtheorem{thm}[prop]{Theorem}
\newtheorem{cor}[prop]{Corollary}
\newtheorem{lemma}[prop]{Lemma}
\theoremstyle{definition}
\newtheorem{Def}[prop]{Definition}
\newtheorem*{Def*}{Definition}
\newtheorem{Defs}[prop]{Definitions}
\newtheorem{examples}[prop]{Examples}
\newtheorem*{notation*}{Notation}
\newtheorem{remark}[prop]{Remark}
\newcommand{\zz}{\mathbb{Z}}
\newcommand{\ga}{\alpha}
\newcommand{\gb}{\beta}
\newcommand{\gc}{\gamma}
\newcommand{\gC}{\Gamma}
\newcommand{\gD}{\Delta}
\newcommand{\gr}{\rho}
\newcommand{\gt}{\tau}
\newcommand{\lan}{\langle}
\newcommand{\ran}{\rangle}
\newcommand{\onto}{\twoheadrightarrow}
\numberwithin{equation}{section}
\begin{document}
\title[]{Relative Schur multipliers and  universal extensions of group homomorphisms}
\author[Emmanuel D.~Farjoun,\quad Yoav Segev]
{Emmanuel~D.~Farjoun\qquad Yoav Segev}

\address{Emmanuel D.~Farjoun\\
        Department of Mathematics\\
        Hebrew University of Jerusalem, Givat Ram\\
        Jerusalem 91904\\
        Israel}
\email{farjoun@math.huji.ac.il}

\address{Yoav Segev\\
        Department of Mathematics\\
        Ben Gurion University\\
        Beer Sheva 84105\\
        Israel}
\email{yoavs@math.bgu.ac.il}
\keywords{ central extension, relative Schur multiplier, second homology, hypercenter, universal factorization}
\subjclass[2010]{Primary: 20E22 ,55U}
\begin{abstract}
In this note, starting with
 any group homomorphism $f\colon\gC\to G$, which is surjective upon abelianization,  we construct  a universal
central extension $u\colon U\onto G,$ {\em under $\Gamma$} with the same surjective property, such that for any central extension
$m\colon M\onto G,$ under $f,$ there is a unique
homomorphism $U\to M$ with the obvious commutation condition.  The kernel
of $u$ is the relative Schur multiplier group $H_2(G,\gC;\zz)$ as below.
The case where $G$ is perfect corresponds to $\gC=1$. This yields homological obstructions to
lifting solution of equations in $G.$  Upon repetition, for finite groups,  this gives a universal
hypercentral factorization of the map $f\colon\gC\to G$.
\end{abstract}
\date{\today}
\maketitle

\section{Introduction}

Let $G$ be a group.  The purpose of this note is to extend the well known
theorem saying  that if   $G$  is perfect, then there is  a universal
perfect central extension $\widehat{G}\to G,$ whose  kernel is
the Schur multiplier $H_2(G;\zz)$ (\cite{Ka}). We replace the condition  of  $G$ being perfect
with the hypothesis that   a map $\Gamma \to G$  induces a surjection on the abelian quotients.

Throughout this note we fix a group homomorphism $f\colon\gC\to G,$
and we
consider  extensions
\[
0\to K\to M\to G\to 1.
\]
%
in which often $K=A$ is  an abelian group.
The following is the basic concept used in this note (compare with the end of section 4, p.~263 of Hochschild's paper \cite{H}.)

\begin{Defs}\label{def f-central}
{\em
\begin{enumerate}
\item
An $f$-extension of $G$  is a pair $(M,\psi),$ where $M$ is an extension of $G$
and  $\psi\colon\gC\to M$ is a map--{\em  the structure map}--that factorizes  $f$
as in diagram \eqref{diag f-ext} below. If the extension is central we refer to it as
a {\em central $f$-extension.}

\begin{equation}\label{diag f-ext}
\xymatrix{
&&&\Gamma\ar[dl]_{\psi}\ar[dr]^f\\
0\ar[r]&K\ar[r]&M\ar[rr]^n&&G\ar[r]& 0\\
}
\end{equation}
\item
A map between two $f$- extensions $(M,\psi)$ and $(M',\psi')$ of $G$
with kernels $K, K'$ respectively, is a map of the underlying extensions
which is the identity on $G,$ as in the commutative diagram \eqref{diag map f-ext}
below.
\end{enumerate}
}
\end{Defs}
\smallskip

\noindent
{\bf Notation and Remarks}. Hochschild in his  relative homological algebra  paper deals with a similar situation for
subgroups $\Gamma\subseteq G$, see \cite[section 3 and 4]{H}.
Our treatment here is for general group maps and  this allows a simpler formulation.
When there are no coefficients in sight, we take (co)homology  with integral coefficients.
Relative (co)homology  groups
are calculated with respect to  our given fixed map $f$.
That is, for an abelian group $A$, by $H_*(G,\gC; A)$ we mean
 $H_*(BG\cup_{B f}{\rm Cone}(B\gC); A).$  A more careful-but still straightforward-definition of relative (co)-homology is needed for
 relative (co-)homology with local coefficients in a  $G$-module $A$. The former is used to formulate the following classification statement which is
a direct analogue of the classical absolute case \cite{H}
(see subsections 2.4, 2.5 below). We formulate it for central extensions but a similar result exists for a more general situation:
\begin{prop}\label{prop f-central}
The   equivalence classes of central $f$-extensions of $G,$ with a given  abelian group $A$ as a  kernel,
have a natural abelian group structure (see Definition \ref{def ext}) and as such are classified by
the relative cohomology group $H^2(G,\gC; A)$, with   coefficients $A$.
\end{prop}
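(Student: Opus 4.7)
\emph{Proof plan.} The strategy is to adapt the classical Hochschild--Eilenberg--MacLane classification of central extensions by $H^2(G;A)$ to the relative setting. The geometric picture is the following: a central extension $A\hr M\onto G$ determines a classifying map $c\colon BG\to K(A,2)$, and a structure map $\psi\colon\gC\to M$ lifting $f$ corresponds, upon applying $B$, to a lift of $Bf$ to $BM$, i.e.\ a nullhomotopy of $c\circ Bf\colon B\gC\to K(A,2)$. A map together with a nullhomotopy of its restriction along $Bf$ is precisely a map out of the mapping cone, so the pair $(M,\psi)$ should correspond to a class in
$$
H^2\bigl(BG\cup_{Bf}{\rm Cone}(B\gC);A\bigr)=H^2(G,\gC;A).
$$
The task is to promote this heuristic to an isomorphism of abelian groups, naturally in $f$.

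First I would define the group operation by a relative Baer sum. Given $f$-extensions $(M_1,\psi_1)$ and $(M_2,\psi_2)$ with common kernel $A$, form the fiber product $M_1\times_G M_2$; it is a central extension of $G$ by $A\times A$. Quotient by the anti-diagonal $\{(a,-a)\}$ to obtain a central extension $M_1\boxplus M_2\onto G$ with kernel $A$, and define the structure map as the class of the diagonal $\gamma\mapsto(\psi_1(\gamma),\psi_2(\gamma))$. Routine diagram chases verify associativity, commutativity, existence of the neutral element (the split extension with $\psi=(1,f)$), and inverses (given by negating the $A$-action).

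To establish the bijection I would work with the bar complex. A $2$-cocycle $c\in Z^2(G;A)$ realizes $M$ as $G\times A$ with twisted product, and the factorization $\psi$ produces a $1$-cochain $\eta\in C^1(\gC;A)$ with $\delta\eta = f^*c$; the pair $(c,\eta)$ is exactly a $2$-cocycle for the mapping cone of $Bf$, giving a class in $H^2(G,\gC;A)$. Conversely, any relative $2$-cocycle $(c,\eta)$ produces the twisted extension determined by $c$ together with the section $\gamma\mapsto(f(\gamma),\eta(\gamma))$ as structure map. One then checks that cohomologous relative cocycles give $f$-equivalent extensions in the sense of Definition \ref{def f-central}(2), and that the Baer sum defined above corresponds to addition of cocycles.

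The main obstacle is injectivity: two $f$-extensions whose relative cohomology classes agree must be equivalent \emph{as $f$-extensions}, i.e.\ the underlying isomorphism $M_1\to M_2$ must intertwine $\psi_1$ and $\psi_2$. The cleanest way to handle this is via the long exact sequence of the pair,
$$
H^1(\gC;A)\lra H^2(G,\gC;A)\lra H^2(G;A)\lra H^2(\gC;A),
$$
in which the middle map is ``forget $\psi$'' and lands in the classical group of central extensions of $G$ by $A$. Its kernel is the image of $H^1(\gC;A)=\Hom(\gC,A)$, which is precisely the indeterminacy in the choice of $\psi$: modifying $\psi$ by a homomorphism $\gC\to A$ changes the $f$-extension by an equivalence but leaves the underlying extension of $G$ unchanged. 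Matching this indeterminacy against the connecting map yields both surjectivity and injectivity, completing the classification.
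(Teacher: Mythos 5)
Your proposal is correct and follows essentially the same route as the paper: the relative Baer sum you describe (fiber product modulo the anti-diagonal) is the paper's pullback-then-pushout construction, the assignment of a pair (cocycle on $G$, $1$-cochain on $\gC$ trivializing its pullback) is exactly the paper's map ${\rm Ext}^{\gC}(G;A)\to H^2(G,\gC;A)$, and your use of the long exact sequence of the pair to absorb the $\Hom(\gC,A)$-indeterminacy in $\psi$ is the paper's comparison of the two five-term exact sequences via the five lemma.
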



The following is an extension of the Schur universal extension to a relative case:

\begin{thm}\label{thm f-central}
Assume that the map $f_{ab}\colon \gC_{ab}\to G_{ab}$ induced on the abelianizations is
surjective.  Then
there exists a universal central  $f$-extension
$(U,\eta)$ of $G$ with kernel  $H_2(G,\gC),$ such that
for any central $f$-extension  $(E,\psi)$ of $G,$ there
is a unique map of central $f$-extensions (as in Definition \ref{def f-central}(2))
from $U$ to $M$. Moreover the map $\eta_{ab}: \Gamma_{ab}\to U_{ab}$ is again surjective.
\end{thm}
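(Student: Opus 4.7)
The plan is to obtain $(U,\eta)$ by Yoneda-style representability. First I would note that the long exact homology sequence of the pair $(G,\Gamma)$,
\[
H_1(\Gamma)\xrightarrow{f_{ab}}H_1(G)\to H_1(G,\Gamma)\to \widetilde{H}_0(\Gamma)\to \widetilde{H}_0(G),
\]
together with the surjectivity of $f_{ab}$, forces $H_1(G,\Gamma;\zz)=0$. The universal coefficient theorem then produces, for every abelian group $A$, a natural isomorphism $H^2(G,\Gamma;A)\cong\Hom(H_2(G,\Gamma),A)$. Combined with Proposition~\ref{prop f-central}, this identifies the functor $A\mapsto\{\text{central }f\text{-extensions of }G\text{ with kernel }A\}$ with $\Hom(H_2(G,\Gamma),-)$. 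I then take $(U,\eta)$ to be a central $f$-extension with kernel $H_2(G,\Gamma)$ corresponding to $\mathrm{id}_{H_2(G,\Gamma)}$.

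Existence of the classifying map $(U,\eta)\to(E,\psi)$ for an arbitrary central $f$-extension $(E,\psi)$ with kernel $B$ is then formal: $(E,\psi)$ corresponds to some $\phi\in\Hom(H_2(G,\Gamma),B)$, and the pushout $\phi_*(U,\eta)$ corresponds to $\phi\circ\mathrm{id}=\phi$, hence is equivalent to $(E,\psi)$ as $f$-extensions; the equivalence supplies the map.

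The hard part will be proving that $\eta_{ab}$ is surjective. My plan is to apply the five-term exact sequence to the central extension $0\to H_2(G,\Gamma)\to U\to G\to 0$,
\[
H_2(G)\xrightarrow{\tau}H_2(G,\Gamma)\to U_{ab}\to G_{ab}\to 0,
\]
and to identify $\tau$ with the natural map $H_2(G)\to H_2(G,\Gamma)$ from the long exact sequence of the pair. This identification follows because the classifying class of $U$ is the image of $\mathrm{id}$ under $H^2(G,\Gamma;H_2(G,\Gamma))\to H^2(G;H_2(G,\Gamma))$, so its pairing with $z\in H_2(G)$ returns the image $\bar z\in H_2(G,\Gamma)$. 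Consequently $\mathrm{coker}(\tau)$ coincides with $\im\delta\subseteq H_1(\Gamma)=\Gamma_{ab}$, where $\delta$ is the connecting homomorphism of the pair. A chain-level computation---representing $z\in H_2(G,\Gamma)$ by a pair $(c_2,c_1)$ with $\partial c_2=f_*c_1$, lifting $c_2$ to $\tilde c_2\in C_2(U)$, and recognising the corresponding element of the kernel as the $1$-cycle $\partial\tilde c_2-\eta_*c_1\in C_1(H_2(G,\Gamma))$---then computes its class in $U_{ab}$ as $-\eta_{ab}(\delta(z))$, since $\partial\tilde c_2$ is a boundary and $[\eta_*c_1]=\eta_{ab}([c_1])$. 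Hence $\im(H_2(G,\Gamma)\to U_{ab})\subseteq\im(\eta_{ab})$; combined with the surjectivity of $U_{ab}\twoheadrightarrow G_{ab}$ and of $f_{ab}$ this yields $U_{ab}=\im(\eta_{ab})$.

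Uniqueness of the map $(U,\eta)\to(E,\psi)$ is then cheap: any two such maps $\phi,\phi'$ differ by $d(u):=\phi(u)\phi'(u)^{-1}\in B$, which is a homomorphism $U\to B$ because $B$ is central, vanishes on $\eta(\Gamma)$ since both $\phi,\phi'$ factor $\psi$, and therefore vanishes on $U_{ab}/\im(\eta_{ab})=0$, giving $\phi=\phi'$. The only real obstacle is the chain-level naturality argument in the preceding paragraph, extracting geometric content from the bare representability, which I would verify by writing out the explicit chain map underlying Proposition~\ref{prop f-central}.
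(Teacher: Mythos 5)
Your construction of $(U,\eta)$ and the existence of the comparison map are exactly the paper's route: $H_1(G,\Gamma)=0$ from surjectivity of $f_{ab}$, universal coefficients, Proposition~\ref{prop f-central}, and the class of $\mathrm{id}_{H_2(G,\Gamma)}$, with naturality of the pushout supplying the map to any $(E,\psi)$ (this is the paper's Corollary~\ref{cor surjection}, Definition~\ref{def universal} and Lemma~\ref{lem universal}). Where you genuinely diverge is in the logical ordering of the last two claims, and I think your ordering is the better one. The paper's ``direct'' proof that $\eta_{ab}$ is surjective invokes the \emph{uniqueness} of maps out of $(U,\eta)$, while uniqueness itself is only lightly justified there (``occurs uniquely as a pushout'' pins down the classifying homomorphism, not the map $U\to E$); you instead prove surjectivity of $\eta_{ab}$ first by a purely homological argument and then deduce uniqueness by the standard device (the difference of two maps is a homomorphism $U\to B$ into the central kernel, killed on $\eta(\Gamma)$, hence zero once $\eta_{ab}$ is onto), which breaks the potential circularity. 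Your surjectivity argument is a repackaging of the paper's second proof of Lemma~\ref{lem surj on ab}: where the paper uses a relative five-term sequence with the boundary $H_2(G,\Gamma)\to A$ ``an isomorphism by construction,'' you use the absolute five-term sequence plus the long exact sequence of the pair, identifying the transgression $\tau$ with the natural map $H_2(G)\to H_2(G,\Gamma)$ via evaluation of the classifying class; both hinge on the same (correct, but only sketched in either text) identification of a transgression with evaluation against $\mathrm{id}$. One small imprecision to fix when you write out the chain-level step: $\partial\tilde c_2-\eta_*c_1$ does not lie in $C_1(H_2(G,\Gamma))$ in the bar complex, only in $\ker\bigl(C_1(U)\to C_1(G)\bigr)$; this does not affect your computation of its class in $U_{ab}$, which is all you use, but the phrase ``$1$-cycle in $C_1(H_2(G,\Gamma))$'' should be weakened accordingly.
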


\noindent
{\bf 1.4.\ The universal hyper-central extension}. In the absolute Schur case the universal
central extension does not have non-split extensions, and its second homology group vanishes. In the present case the relative
universal central extension has, in general,  further non-split central $f$-extensions, and  among them  a universal one.
However for finite groups this process of taking universal central extensions stops after a finite number of steps. In this way one gets a {\em universal hypercentral extension}
 with no further non-split $f$-central ones. By hyper central extension one simply means a  composition $E_n\to E_{n-1}\to \cdots\to E_1\to E_0=G$
 of a finite number of  central extensions $E_{i+1}\twoheadrightarrow E_i.$

Thus one gets a tower of universal  central $f$-extensions:
\[
\dots\ \twoheadrightarrow U_n\twoheadrightarrow U_{n-1}\twoheadrightarrow\cdots U_1=U\twoheadrightarrow G.
\]
 As mentioned above, for a map of finite groups this tower terminates with $U_\infty=U_N$ a finite group yielding
a factorization $\Gamma\to U_\infty\to G.$ This
$U_\infty$ is the universal, both initial and terminal,  factorization of the map $f$  in the following sense:

\setcounter{prop}{4}
\begin{thm}\label{thm uni U}
Let  $f:\Gamma\to G$ be a map of finite groups  that  induces  surjection $f_{ab}\colon \gC_{ab}\twoheadrightarrow G_{ab}$ on  the abelianizations. Then
$U_\infty=U_N$ for some natural $N> 0,$ and it is a universal hypercentral $f$-extension  of $G:$
This extension maps uniquely to any other hypercentral $f$-extension of $G.$ In addition it  satisfies
$H_i(U_\infty,\Gamma)=0$ for $i=1,2$ and  it is terminal among all such factorizations under $f$ namely $f$-extensions $A\to E\to G$
with  $H_i(E,\Gamma)\cong 0$ for $i=1,2.$
\end{thm}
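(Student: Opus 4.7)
The plan is to build the tower $\{U_n\}_{n\geq 0}$ inductively via Theorem \ref{thm f-central}, prove termination using the finiteness hypothesis, and then derive the initial and terminal properties from Proposition \ref{prop f-central} via relative cohomology. I would set $U_0=G$ with structure map $\eta_0=f$, and define $U_{n+1}$ inductively as the universal central $\eta_n$-extension of $U_n$ from Theorem \ref{thm f-central}, where $\eta_n\colon\Gamma\to U_n$ is the structure map arising at stage $n$. The surjectivity $\Gamma_{ab}\onto(U_n)_{ab}$ required to apply Theorem \ref{thm f-central} at stage $n$ is preserved by its moreover clause, so the construction proceeds for all $n$. Finiteness of $\Gamma$ and $G$ forces every $U_n$ to be finite, since $H_2(U_{n-1},\Gamma)$ is then finite and equals the kernel of $U_n\onto U_{n-1}$, giving $|U_{n+1}|=|U_n|\cdot|H_2(U_n,\Gamma)|$.

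The main obstacle is showing the tower stabilizes, that is, $H_2(U_n,\Gamma)=0$ for some $n$. I would approach this by realizing the $U_n$ as an explicit descending sequence of quotients of a single finitely generated free group, so that the finiteness of $\Gamma$ forces stabilization. Concretely, choose a finitely generated free group $F$ with a surjection $F\onto\Gamma$ (adjoining free generators lifting elements of $G$ if necessary to make the composite $F\to G$ surjective), and set $R=\ker(F\to\Gamma)$ and $K=\ker(F\to G)\supseteq R$. Using a relative Hopf-type interpretation of $H_2(U_n,\Gamma)$, the aim is to show by induction that $U_n\cong F/N_{n+1}$ for a descending chain $N_1\supseteq N_2\supseteq\cdots$ of normal subgroups of $F$ containing $R$. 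Passing to the finite quotient $\Gamma=F/R$, the induced chain $\{N_n/R\}$ descends in a finite group and must stabilize at some stage $N$, delivering $U_\infty=U_N$ as a finite group. The delicate verification is that the next term of such an explicit construction really agrees with the universal central extension furnished by Theorem \ref{thm f-central}.

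Once termination is established and $U_\infty=U_N$, $H_1(U_\infty,\Gamma)=0$ follows from the moreover clause of Theorem \ref{thm f-central} iterated along the tower, while $H_2(U_\infty,\Gamma)=0$ follows from $U_{N+1}=U_N$: the universal central $\eta_N$-extension of $U_N$ has trivial kernel, and this kernel is $H_2(U_N,\Gamma)$ by Theorem \ref{thm f-central}. For the initial property, given a hypercentral $f$-extension $E_k\to\cdots\to E_0=G$, I will construct the unique map $U_\infty\to E_k$ by induction on $k$. Assuming $U_\infty\to E_j$ is built, the pullback $E_{j+1}\times_{E_j}U_\infty\to U_\infty$ is a central $f$-extension of $U_\infty$; by Proposition \ref{prop f-central}, the universal coefficient theorem, and the vanishing $H_1(U_\infty,\Gamma)=H_2(U_\infty,\Gamma)=0$, every central $f$-extension of $U_\infty$ is trivial and admits a unique section under $\Gamma$, yielding the unique lift $U_\infty\to E_{j+1}$.

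For the terminal property, given an $f$-extension $A\to E\to G$ with $H_i(E,\Gamma)=0$ for $i=1,2$, I will construct the unique map $E\to U_\infty$ by lifting inductively up the tower. At each stage, lifting $E\to U_n$ to $E\to U_{n+1}$ corresponds to producing a section under $\Gamma$ of the central $f$-extension $E\times_{U_n}U_{n+1}\to E$; the obstruction sits in $H^2(E,\Gamma;H_2(U_n,\Gamma))$ and the indeterminacy in $H^1(E,\Gamma;H_2(U_n,\Gamma))$, both of which vanish by universal coefficients applied to $H_1(E,\Gamma)=H_2(E,\Gamma)=0$. Iterating up to $n=N$ yields the required unique $E\to U_\infty$.
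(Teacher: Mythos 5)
Your overall architecture --- build the tower $U_{n+1}\to U_n$ by iterating Theorem \ref{thm f-central} (using its ``moreover'' clause to keep $\eta_{n,ab}$ surjective), show the tower stabilizes, then deduce the two universality statements --- matches the paper's. Your treatment of the initial and terminal properties is sound and in fact a somewhat cleaner route than the paper's: you reduce both to the vanishing of ${\rm Ext}^{\Gamma}(U_\infty;A)$ and ${\rm Ext}^{\Gamma}(E;A)$ via Proposition \ref{prop f-central}, Corollary \ref{cor surjection} and the vanishing of $H_1$ and $H_2$, whereas the paper argues initiality step by step from the universality of each $U_{n+1}\to U_n$ and terminality by running the universal tower $W_\infty$ over $M$ and invoking naturality. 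Your derivation of $H_i(U_\infty,\Gamma)=0$ for $i=1,2$ from stabilization and the surjectivity on abelianizations is also the intended one.

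The genuine gap is the termination argument. You propose to realize each $U_n$ as $F/N_{n+1}$ for a descending chain of normal subgroups with $N_{n+1}\supseteq R=\ker(F\twoheadrightarrow\Gamma)$, and to conclude stabilization because the chain $\{N_n/R\}$ descends inside the finite group $F/R=\Gamma$. But $N_{n+1}\supseteq R$ says precisely that $U_n$ is a quotient of $\Gamma$, and this is false unless $f$ itself is surjective: the structure maps $\eta_n\colon\Gamma\to U_n$ are only surjective on abelianizations, not surjective. Already for $\Gamma=1\to P$ with $P$ perfect (where $f_{ab}$ is surjective) one has $U_1=\widehat{P}$, the Schur cover, which is no quotient of $\Gamma=1$; likewise for $f_x\colon C\to P$ as in the paper's Section 2.10, where $\ker(U_1\to P)=H_2(P,C)$ contains $H_2(P)$, so $|U_1|$ exceeds the order of any quotient of $C$. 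Your parenthetical repair --- adjoining free generators so that $F\to G$ becomes surjective --- is incompatible with the rest of the setup: those generators have no image in $\Gamma$ compatible with $f$, so one loses either the surjection $F\twoheadrightarrow\Gamma$ or the factorization of $F\to G$ through $f$, and in either case the finite quotient $F/R=\Gamma$ in which your chain is supposed to descend is no longer available. Thus the stabilization step --- the one place where finiteness must genuinely be used --- is not established (and you yourself flag as ``delicate'' the identification of the explicit construction with the universal one, which cannot succeed given the false structural claim). The paper's route is different: the kernel of $U_n\to G$ is nilpotent, being built from iterated central extensions, and the key lemma of Section 3 --- a homomorphism into a nilpotent group that is surjective on abelianization is surjective, by a Frattini-subgroup argument --- is combined with the stabilization argument of \cite[Lemmas 4.4 and 4.5]{FS}. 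An argument of that kind, bounding or stabilizing the nilpotent kernels $\ker(U_n\to G)$, is what needs to replace your free-group chain.
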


\section{$f$-central extensions and relative Schur multiplier}\label{sub f-central}
Recall that throughout this note
\[
f\colon\gC\to G
\]
is a {\it fixed map}.
The following definitions are direct generalizations of the usual definitions of
a group structure on extensions as in \cite{M},
the usual proofs extend here with only minor changes.

\begin{Def}\label{def ext}
A map between to $f$-extensions $M$ and $M'$ is a commutative diagram as below; these extensions
are called {\it equivalent} if $\kappa$ below is the
identity-and thus $\gt$  is an isomorphism:
\begin{equation}\label{diag map f-ext}
\xymatrix{
0\ar[r] &K\ar[r]\ar[dd]_{\kappa} & M\ar[rr]^n\ar[dd]_{\tau} &&G\ar[r]\ar[dd]^{=} & 1\\
&&&\ar[ru]^f\Gamma\ar[rd]^f\ar[lu]_{\psi}\ar[ld]_{\psi'}\\
0\ar[r]&K'\ar[r]&M'\ar[rr]^{n'} && G\ar[r]&1\\
}
\end{equation}

We denote by ${\rm Ext}^{\gC}(G,A)$ the set of all equivalence classes
of central $f$-extensions of $G$ with kernel $A$ (where the map $f$ is suppressed from the notation).
We now recall and extend the usual group
structure on ${\rm Ext}^{\gC}(G,A)$.  The additional information
that we insert is the map from $\gC$ to the added extension.
Let
\[
0\to A\xrightarrow{\ga} M\xrightarrow{\gb} G\to 1,\qquad\psi
\]
and
\[
0\to A\xrightarrow{\ga'} M'\xrightarrow{\gb'} G\to 1,\qquad\psi'
\]
be two $f$-extensions.  Consider the following diagram
\[
\xymatrix{
0\ar[r] &A\times A\ar[r]^{\ga\times\ga'} &M\times M'\ar[r]^{\gb\times\gb'} &G\times G\ar[r] &1\\
0\ar[r] &A\times A\ar[d]_+\ar[u]^{{\rm id}\times{\rm id}}\ar[r]^{\gc} &PB\ar[d]\ar[u] \ar[r] &G\ar[u]_{\gD}\ar[r]\ar[d]^= &1\\
0\ar[r] &A\ar[r]^{\iota} &PO\ar[r] &G\ar[r] &1
}
\]
Here, $\gD(g)=(g,g)$ is the diagonal map, $PB$ is the pullback  in the right upper square,
the map $A\times A\xrightarrow{+} A$ is addition, and $PO$ is the push-out in the left lower square.
That push-out maps to $G$ by the universal property of push-outs.
Notice that we have the map $\psi\times\psi'\colon \gC\to M\times M'$ and the map $f\colon\gC\to G,$
and that $(\psi\circ\psi')\circ(\gb\times\gb')=f\circ\gD,$ so by the universal property
of pullbacks there is a map $\gC\to PB,$ and consequently a map $\psi''\colon \gC\to PO$.
Thus $(PO,\psi'')$ is the sum of the given extensions. (Notice that since $\ga\times\ga'$
is injective, so is $\gc$ and consequently also $\iota$).

The inverse of an element is its  push-out along $-1: A\to A.$
\end{Def}

\begin{examples}\label{eg equivalence}
\begin{enumerate}
\item
Assume that $\gC=G$ and that $f$ is the identity map.
Then any two $f$-extensions of $G$ with kernel $A$
are isomorphic.  Indeed, if $(M,\psi)$ and $(M',\psi')$ are such
$f$-extensions,
then we may assume that $A$ is contained in both $M$ and $M',$
and then $M=A \rtimes\psi(G),$ the semi direct product, and $M'=A\rtimes\psi'(G)$
and the map $1_A\times(\psi^{-1}\circ\psi')$ is an isomorphism
of extensions.

\item
Assume that $G=\{1\}$.  Then one has $M\cong A$ and there is only one
equivalence class of central extensions of $G$ with kernel $A$.  However,
for any map $\psi\colon\gC\to A$ we get that $(M,\psi)$ is an $f$-extension
 of $G$, and note that distinct maps $\psi$ will give inequivalent
central $f$-extensions hence ${\rm Ext}^{\gC}(1;A)\cong Hom(\Gamma,A).$

\item
The neutral element of ${\rm Ext}^{\gC}(G;A)$ is
a (semi-direct product) split extension\linebreak $0\to A\to A\rtimes G\to G\to 1,$
coming from the action of $G$ on $A,$ where $\psi_0\colon\gC\to A\rtimes G$
is the map $\psi_0(\gc)=(0,f(\gc)),$ for all $\gc\in\gC$.

\item
Note that if $f$ itself is a central extension of $G$, then it is itself a (universal)  central $f$-extension
as in the theorem above,
with the identity  on $\Gamma$ as the structure map.
\end{enumerate}
\end{examples}

Notice that

\begin{lemma}\label{lem pullback}
Let $M$ be an extension of $G$ with abelian  kernel $A,$ and
let $M'$ be the pull-back of $M$  along $f$ as in diagram \eqref{diag pullback} below:
\begin{equation}\label{diag pullback}
\xymatrix{
M'\ar[r]^{n'}\ar[d]_{\pi} & \gC\ar[d]^f\\
M\ar[r]^{n} & G
}
\end{equation}
then
\begin{enumerate}
\item
if $M$ is central then $M'$ is a central extension of $\gC$ with the same kernel $A$;

\item
if $(M,\psi)$ is an $f$-extension, then $M'$ splits;

\item
conversely, if $n'$
has a section $s\colon\gC\to M',$  then $(M,\psi)$ is an $f$ -extension, where $\psi=s\circ\pi\colon \gC\to M$.
\end{enumerate}
 \end{lemma}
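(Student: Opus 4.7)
\medskip
\noindent
\textbf{Proof plan.} The natural strategy is to work with the concrete model of the pullback, namely
\[
M' \;=\; \{(m,\gc)\in M\times\gC \;:\; n(m)=f(\gc)\},
\]
with $\pi(m,\gc)=m$ and $n'(m,\gc)=\gc$. Since $n$ is surjective, so is $n'$; and $\ker(n')=\{(a,1):a\in A\}$, which I identify with $A$. This immediately sets up the extension structure of $M'$ over $\gC$ with kernel $A$.

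\medskip
For part (1), suppose $A$ is central in $M$. Given $(a,1)\in \ker(n')$ and any $(m,\gc)\in M'$, a one-line computation in the direct product yields
\[
(m,\gc)(a,1)(m,\gc)^{-1}=(mam^{-1},1)=(a,1),
\]
so $\ker(n')$ is central in $M'$, proving (1).

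\medskip
For part (2), assume $(M,\psi)$ is an $f$-extension, so $n\circ\psi=f$. Then for every $\gc\in\gC$ the pair $(\psi(\gc),\gc)$ lies in $M'$, and the assignment $s(\gc):=(\psi(\gc),\gc)$ is a group homomorphism $\gC\to M'$ since $\psi$ is. Clearly $n'\circ s=\mathrm{id}_\gC$, so $n'$ admits a section. For (3), conversely, given a section $s\colon \gC\to M'$ of $n'$, set $\psi:=\pi\circ s\colon\gC\to M$. Using commutativity of the pullback square $f\circ n'=n\circ\pi$ together with $n'\circ s=\mathrm{id}_\gC$ we get
\[
n\circ\psi \;=\; n\circ\pi\circ s \;=\; f\circ n'\circ s \;=\; f,
\]
so $\psi$ is a factorization of $f$ through $M$, i.e.\ $(M,\psi)$ is an $f$-extension.

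\medskip
There is no real obstacle here; the whole lemma is a direct translation of the universal property of the pullback into the language of $f$-extensions. Alternatively, one could bypass the explicit construction entirely and phrase (2) and (3) via the universal property: sections of $n'$ correspond bijectively to maps $\psi\colon\gC\to M$ satisfying $n\psi=f$ (paired with $\mathrm{id}_\gC$), which is precisely the data of an $f$-extension structure on $M$. I would present the concrete model first for transparency, and keep the universal-property viewpoint as a remark.
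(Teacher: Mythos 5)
Your proof is correct and follows essentially the same route as the paper: the explicit model $M'=\{(m,\gc): n(m)=f(\gc)\}$ with the projections, the section $s(\gc)=(\psi(\gc),\gc)$ for part (2), and the composite $\pi\circ s$ for part (3). You merely spell out the centrality computation in (1) and the verification $n\circ\psi=f$ in (3), which the paper leaves as ``clear.''
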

\begin{proof}
Recall that $M'=\{(m,\gc)\mid m\in M,\ \gc\in\gC\text{ and } n(m)=f(\gc)\}$.
Also $n'$ and $\pi$ are the corresponding projection maps.
Hence part (1) is clear.
\medskip

\noindent
(2):\quad
Consider the map $s\colon\gC\to M'$ defined by $s(\gc)=(\psi(\gc),\gc),$ for all $\gc\in\gC$.
Then $s\circ n'=1_{\gC},$ is the identity on $\gC,$ so $s$ is a section of $n'$.
\medskip

\noindent
(3): Clearly
$\psi=s\circ\pi$ shows that $(M,\psi)$ is an $f$-extension of $G$.
\end{proof}

\subsection*{2.4.\ Definition of relative homology and cohomology $H_*(G,\gC;A)$ and $H^*(G,\gC;A)$}\hfill
\medskip

\noindent
Given a map $f\colon\gC\to G,$ and an abelian group $A,$ we now consider  the
relative (co)homology of $f$ with coefficients in $A$. The obvious candidate obtained by taking classifying spaces
works well, see below, but here is an algebraic  exposition  that follows the  usual chain complex approach.

The (co)homology group is defined as the (co)homology of the (co)chain complex
gotten as the {\it mapping cone} (compare \cite[1.5, p.18]{We}).
One starts with the usual chain map induced by the group map $\Gamma \to G$:
\[
C_*(f)\colon C_*(\Gamma;\zz)\to C_*(G;\zz).
\]
Define $C_*(G,\Gamma;\zz)$ to be the mapping cone of $C_*(f)$:
%
\begin{equation}\label{eq relative chain}
C_n(G,\Gamma):= C_n(G)\oplus C_{n-1}(\Gamma)\quad\text{ with }\quad
d(x,y) =(dx-C_{n-1}(f)(y),dy ) \in C_{n-1}(G,\Gamma).
\end{equation}
%
Now proceed as usual-noting that the chains above are dimension-wise free abelian groups- to define the (co)chain complexes with coefficients in $A$ to be:
\begin{equation}\label{eq coefficients}
C_*(G,\Gamma;A):=C_*(G,\Gamma)\otimes A,\qquad C^*(G,\Gamma;A):={\rm Hom}(C_*(G,\Gamma), A).
\end{equation}
%
This completes the definition for coefficients in any abelian group with a trivial $G$ action.

\medskip
\noindent
{\bf Remark}: If one considers  a general module  $M$ over $G,$  it  induces  a module over $\Gamma$ and one must proceed more
carefully to define the appropriate relative chains and co-chains. We shall not pursue it here. Thus from now on we consider only
central extensions and (co-)homology with abelian coefficients.
\smallskip

The map $C^*(f)$ together with the mapping cone gives a long exact sequences in (co)homology.
In addition, from equation \eqref{eq coefficients}, one concludes that the
relative homology and cohomology  are related by the usual {\it universal coefficients theorem}
as in \cite{Sp, We}. (Of course, there is a topological
analogue, see below.)

Looking at the boundary formula of equation \eqref{eq relative chain},
ones sees that a co-cycle element in $C^n(G,\gC;A)$ is a pair $(a,b),$ where $a\in C^n(G;A)$ is a co-cycle,
and $b\in C^{n-1}(\gC,A)$ is an explicit null cohomology of the pullback of $a$ to $C^{n}(\gC;A)$.

By construction there is a sequence of three chain complexes
\[
C^*(G,\gC;A)\to C^*(G;A)\to C^*(\gC;A).
\]

This sequence gives, as usual for mapping cones, a {\it long exact sequence} relating the
usual cohomology groups to the relative cohomology groups.

\subsubsection*{2.5.\ A topological definition}\hfill

\noindent
The group cohomology of $G$ is in fact the cohomology  of the
associated classifying space $BG,$  via  the corresponding  chain complexes $C_*(G)$.
The algebraic and  topological complexes  are equivalent as chain complexes,
hence
we can take the above relative cohomology
as the relative  cohomology, namely
$H^*(G,\Gamma; A)=H^*(BG, B\gC;A)$
and similarly for homology with coefficients in an abelian group $A.$

Since  coefficients {\em are constant,} i.e.~the trivial module over the groups involved,
a relative cohomology  class  corresponds to a homotopy
class of pointed maps $[u]:BG\to K(A,2),$ together with a class of null-homotopy $[\nu]$
of the pre-composition of $[u]$ to $B\Gamma$.  This $\nu$ gives the extension of $u$ to the cone.
Further $\nu$  corresponds exactly
to a lift of  $u\circ B(f)$ to the homotopy fibre of $u$.  This lift is a map
$B\Gamma \to BE,$  were $E$ is the central extension of $G$ corresponding,
as usual, to the algebraic cohomology class corresponding to the class $[u]$.
A similar statement is true for local coefficients except that we need the usual twisted classifying space
to represent a cohomology class.
Proposition \ref{prop f-central}  below gives a detailed algebraic formulation
of the topological observation above.
\medskip

We now use the above basic definitions and observations in the following result, which, in
details and spirit, is a direct extension of the classical classification result for central extensions of group
via the second cohomology. An analogous result is true for general extensions with abelian kernel and both proofs are versions of
 the usual proof  for the non-relative case as in \cite{M}.

\setcounter{prop}{5}
\begin{prop}\label{prop f-central}
The abelian group  ${\rm Ext}^{\gC}(G;A)$  of  equivalence classes of central $f$-extension of $G$ with a given abelian group $A$ as  the central  kernel, are classified by
the relative cohomology group $H^2(G,\gC; A)$ as above.
\end{prop}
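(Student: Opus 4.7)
The plan is to construct an explicit bijection between ${\rm Ext}^{\gC}(G;A)$ and $H^2(G,\gC;A)$ that refines the classical cocycle description so that the auxiliary $1$-cochain on $\gC$ records the structure map $\psi$.

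Going from an $f$-extension to a cocycle: given $(M,\psi)$, I pick a normalized set-theoretic section $s\colon G\to M$ with $s(1)=1$, and let
\[
a(g_1,g_2)=s(g_1g_2)^{-1}s(g_1)s(g_2)\in A,\qquad b(\gc)=s(f(\gc))^{-1}\psi(\gc)\in A.
\]
The element $a$ is the usual $2$-cocycle representing the underlying central extension, and $b$ is well defined because $s(f(\gc))$ and $\psi(\gc)$ both project to $f(\gc)\in G$. Rewriting $\psi(\gc_1\gc_2)=\psi(\gc_1)\psi(\gc_2)$ in terms of $s\circ f$ and $b$, and using centrality of $A$, yields precisely the relation $f^{*}a=\delta b$ in $C^{2}(\gC;A)$, so $(a,b)$ is a $2$-cocycle in the mapping cone complex of \eqref{eq relative chain}.

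Independence of choices is routine: another normalized section $s'=c\cdot s$ with $c\colon G\to A$ replaces $(a,b)$ by $(a+\delta c,\,b-f^{*}c)$, which is exactly the mapping-cone coboundary of the $1$-cochain $(c,0)$; and an equivalence $\tau\colon(M,\psi)\to(M',\psi')$ of $f$-extensions transports a section of $M$ to one of $M'$ compatible with $\psi'=\tau\circ\psi$, producing the same pair $(a,b)$. For the inverse direction, given a cocycle $(a,b)$, I put on $M=A\times G$ the twisted multiplication $(x,g)(x',g')=(x+x'+a(g,g'),gg')$ and set $\psi(\gc)=(b(\gc),f(\gc))$. The verification that $\psi$ is a homomorphism reduces, upon expanding both sides, to the cocycle identity $f^{*}a=\delta b$; so $(M,\psi)$ is a central $f$-extension of $G$ with kernel $A$, and composing with Lemma \ref{lem pullback}(3) shows nothing is lost.

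Finally, the two constructions are mutually inverse up to equivalence, and they intertwine the Baer sum of Definition \ref{def ext} with addition of cocycles: the pullback/pushout recipe adds the $2$-cocycle coordinates, while the induced structure maps $\psi''$ inherit the sum of the $b$-coordinates, because at every stage the $A$-component is added and the $G$-component is untouched. This yields the claimed isomorphism of abelian groups ${\rm Ext}^{\gC}(G;A)\cong H^{2}(G,\gC;A)$. The principal subtlety, and essentially the only new content beyond the classical proof of \cite{M}, lies in bookkeeping the cochain $b$ and aligning the sign conventions in the mapping-cone differential so that $f^{*}a=\delta b$ corresponds exactly to $\psi$ being a group homomorphism rather than merely a set-theoretic lift of $f$.
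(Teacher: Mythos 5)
Your proof is correct, and your forward map is essentially the one the paper also writes down: the pair $(a,b)$ with $a$ the usual $2$-cocycle of a set-theoretic section $s$ and $b$ measuring the discrepancy between $s\circ f$ and $\psi$ is exactly the paper's pair $(c,w)$, obtained there by comparing the set-theoretic section of the pullback $E'\to\Gamma$ with its group-theoretic splitting. Where you genuinely diverge is in how bijectivity is established. The paper never inverts the map at the cochain level; instead it fits ${\rm Ext}^{\Gamma}(G;A)$ into a five-term exact sequence
\[
H^1(G;A)\to H^1(\Gamma;A)\to {\rm Ext}^{\Gamma}(G;A)\to H^2(G;A)\xrightarrow{f^*} H^2(\Gamma;A),
\]
using Lemma~\ref{lem pullback} to identify the image in $H^2(G;A)$ with $\ker f^*$ and an explicit ${\rm Hom}(\Gamma,A)$-action to handle exactness at ${\rm Ext}^{\Gamma}(G;A)$, then compares with the mapping-cone sequence \eqref{eq exact H2} and invokes the five lemma. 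Your route is more elementary and self-contained: you exhibit the explicit inverse (twisted product plus $\psi(\gamma)=(b(\gamma),f(\gamma))$), check that changing the section produces precisely the coboundary of $(c,0)$, and verify additivity under Baer sum directly. What the paper's approach buys is that naturality in $A$ and the placement of ${\rm Ext}^{\Gamma}(G;A)$ in the long exact sequence come for free, which is then used in Corollary~\ref{cor surjection}; what yours buys is a hands-on dictionary between structure maps and the cochain $b$, at the cost of the sign bookkeeping you rightly flag and of one small point you should make explicit, namely that the $C^0(\Gamma;A)$ component of a $1$-cochain in the mapping cone contributes nothing to the coboundary (trivial coefficients), so the coboundaries $(\delta c,-f^*c)$ you produce really exhaust all of $B^2(G,\Gamma;A)$ and your map is injective on equivalence classes.
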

\begin{proof}
Namely we construct a natural isomorphism:
\[
{\rm Ext}^{\gC}(G;A)\cong H^2(G,\gC; A)
\]
The proof is a simple extension of the classification of usual central extensions of
$G$ via  the second cohomology group $H^2(G,A)$.
Let ${\rm Ext}^{\gC}(G;A)$ be
as defined in Definition \ref{def ext}.

The central extensions of $G$ with kernel  $A$ are classified, up to equivalence, by the second cohomology
$H^2(G;A)$.  Now the restriction map  $f^*\colon H^2(G; A)\to H^2(\Gamma;A)$ takes a central extension
in $H^2(G;A)$ to its pull back along $f$.  By Lemma \ref{lem pullback},
the extensions of $G$ that are $f$-central  are precisely the extensions that are sent to the trivial extension in
$H^2(\Gamma;A)$. This gives an  exact sequence
\[
{\rm Ext}^\Gamma(G,A)\to H^2(G,A)\xrightarrow{f^*} H^2(\Gamma,A).
\]
We show that this sequence can be extended to a $5$-term exact sequence which is the same as the
usual exact sequence for $H^2(G,\Gamma; A),$ namely the sequence:
\begin{equation}\label{eq exact H2}
H^1(G;A)\to H^1(\gC; A) \to H^2(G,\gC; A)\to H^2(G,A)\xrightarrow{f^*} H^2(\gC,A),
\end{equation}
but with $H^2(G,\gC;A)$ replaced by ${\rm Ext}^{\gC}(G;A)$.

We first define a map from $H^1(\gC,A)\cong {\rm Hom}(\gC,A)$
onto the {\it kernel} of the map ${\rm Ext}^\Gamma(G,A)\to H^2(G,A)$.
Given $\mu\colon\gC\to A$ we define an element in ${\rm Ext}^{\gC}(G;A)$
as follows.  We let $M=A\times G,$ with the obvious maps $A\to A\times G\to G,$ and we let $\psi\colon\gC\to M$ be defined
by $\psi(\gc)=(\mu(\gc), f(\gc)),$ for all $\gc\in\gC$.  Then $(M,\psi)\in {\rm Ext}^{\gC}(G;A)$
is in the {\it kernel}.  Conversely given
$(M,\psi)$
in the {\it kernel}, it is a split extension $0\to A\xrightarrow{\ga} M\to G\to 1,$ with section $s\colon G\to M$.
Thus $M=\ga(A)\times s(G)$ and the map $\mu=\psi\circ\pi\circ\ga^{-1}\colon\gC\to A$  defines a central $f$-extension
of $G$ equivalent to $(M,\psi),$ where $\pi\colon M\to \ga(A)$ is the projection.

Finally, by Example \ref{eg equivalence}(3), the composition map ${\rm Hom}(G,A)\to {\rm Hom}(\gC,A)$
 takes ${\rm Hom}(G,A)$ by the above construction to  elements in the kernel  of $H^1(\gC,A)\to {\rm Ext}^\Gamma(G,A)$
 by taking $\phi:G\to A$ to be the appropriate self map  $((a,g)\mapsto (a-\phi(g),g)$ of the product $A\times G.$
 \medskip

To conclude the proof of the proposition we define an isomorphism between the exact sequence
\eqref{eq exact H2} and the exact sequence:
\begin{equation}\label{eq exact Ext}
H^1(G;A)\to H^1(\gC; A) \to {\rm Ext}^{\gC}(G; A)\to H^2(G,A)\xrightarrow{f^*} H^2(\gC,A),
\end{equation}
%

To do that we define a natural map:
\[
{\rm Ext}^{\gC}(G; A)\to H^2(G,\Gamma;A),
\]
The five lemma  then implies that it is an isomorphism.

Given an $f$-extension $(E,\psi)$ we must assign to it an element  $(a,b)\in C^2(G,\Gamma,A)=C^1(\Gamma;A)\oplus C^2(G;A)$.
Let $[c]\in H^2(G;A),$ with $c \in C^2(G;A)$ a co-cycle, be the element corresponding to the  extension $E,$
obtained by ignoring the  map $f$. Then $c$ is built as usual by choosing a set-theoretical section  $s\colon G\to E.$

 Consider now the pullback $E'$ of $(E,\psi)$ along $f$. As we saw in Lemma \ref{lem pullback},
 $E'$ is a split extension of $\Gamma $ by $A,$ with an explicit group theoretical section which we
 denoted $\psi'$.  Further, since $E'$ is also the {\it set theoretic}
 pullback along $f,$ the identity map $\gC\to\gC$ together with the map $f\circ s$
 yields a  {\it set theoretic  section}
 $s'\colon \Gamma\to E'$. The section $s'$ gives the co-cycle $C^2(f)(c)$.
 Of course the co-cycle given by $s'$ is cohomologous to the co-cycle given
 by $\psi'$.  This gives a
 1-cochain
 $w\in C^1(\Gamma,A)$ with $\partial w=C^2(f)(c),$ where $C^2(f)\colon C^2(G;A)\to C^2(\gC;A)$ is the induced
 map on chains.

We have assigned the pair $(w,c)\in C^2(G,\Gamma;A)$ to the
$f$-central extension $E.$ This construction is seen, as usual, to give a co-cycle in $C^2(G,\Gamma;A)$
whose cohomology class is independent of the choice of the set theoretic section $s$.
Moreover, by naturality, this definition of the map ${\rm Ext}^{\gC}(G;A)\to H^2(G,\gC;A)$
is compatible with  the maps between the $5$ terms exact sequences \eqref{eq exact Ext} and \eqref{eq exact H2},
where the other vertical maps are identity maps.
\end{proof}

\begin{remark}
A proof similar to the proof of Proposition \ref{prop f-central}
shows that the proposition holds with central $f$-extensions of $G$
replaced by $f$-extensions of $G$ with a given $G$-module $A$.
\end{remark}

\begin{cor}\label{cor surjection}
Assume that $f$ induces a surjection $f_{ab}\colon \gC_{ab}\to G_{ab},$ and let
$A$ be an abelian group. The group of central $f$-extensions with kernel $A:$
 ${\rm Ext}^{\gC}(G;A)$ is naturally isomorphic to ${\rm Hom}(H_2(G,\gC), A)$.
\end{cor}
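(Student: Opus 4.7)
The plan is to combine Proposition \ref{prop f-central} with the universal coefficient theorem for the relative (co)homology developed in subsection 2.4. By Proposition \ref{prop f-central} we already have a natural isomorphism
\[
{\rm Ext}^{\gC}(G;A)\;\cong\; H^2(G,\gC;A).
\]
So the task reduces to producing a natural isomorphism $H^2(G,\gC;A)\cong{\rm Hom}(H_2(G,\gC),A)$ under the assumption that $f_{ab}$ is surjective.

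First, I would invoke the universal coefficient theorem for the mapping cone chain complex $C_*(G,\gC)$ of equation \eqref{eq relative chain} (which is dimension-wise free abelian, so the standard UCT applies, cf.~subsection 2.4): one obtains a natural short exact sequence
\[
0\to {\rm Ext}(H_1(G,\gC),A)\to H^2(G,\gC;A)\to {\rm Hom}(H_2(G,\gC),A)\to 0.
\]
Thus it suffices to verify that $H_1(G,\gC)=0$, since then the ${\rm Ext}$-term vanishes and the surjection on the right becomes an isomorphism.

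To compute $H_1(G,\gC)$, I would use the long exact sequence associated with the mapping cone (which is precisely what the construction in subsection 2.4 furnishes):
\[
H_1(\gC)\xrightarrow{f_*} H_1(G)\to H_1(G,\gC)\to H_0(\gC)\xrightarrow{f_*} H_0(G).
\]
The first map is $f_{ab}\colon \gC_{ab}\onto G_{ab}$, which is surjective by hypothesis; the last map is the identity $\zz\to\zz$, so its kernel is trivial. Hence $H_1(G,\gC)$ is squeezed between the cokernel of a surjection and a zero group, and therefore vanishes.

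The only real content is the vanishing of $H_1(G,\gC)$; once that is in hand, the two displayed ingredients splice together to give the natural isomorphism ${\rm Ext}^{\gC}(G;A)\cong{\rm Hom}(H_2(G,\gC),A)$ claimed in the corollary. The naturality in $A$ follows from the naturality of both Proposition \ref{prop f-central} and the universal coefficient sequence. I do not foresee a genuine obstacle; the main care needed is to check that the version of the UCT used is the relative one for the mapping cone complex, which is legitimate because \eqref{eq relative chain} is levelwise free abelian.
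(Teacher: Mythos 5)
Your proposal is correct and follows essentially the same route as the paper: identify ${\rm Ext}^{\gC}(G;A)$ with $H^2(G,\gC;A)$ via Proposition \ref{prop f-central}, apply the universal coefficient theorem for the levelwise-free mapping cone complex, and kill the ${\rm Ext}$-term by showing $H_1(G,\gC)=0$ from the long exact sequence and the surjectivity of $f_{ab}$. Your explicit check that $H_0(\gC)\to H_0(G)$ is injective is a small added detail the paper leaves implicit, but the argument is the same.
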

\begin{proof}
As we noted in subsection 2.4, the universal coefficient theorem applies
in our set-up to yield the exact sequence:
\[
0\to {\rm Ext}(H_1(G,\gC);A)\to H^2(G,\gC;A)\to {\rm Hom}(H_2(G,\gC),A)\to 0.
\]
But the short exact sequence on 1-dimensional  homology:
$H_1(\Gamma)\to H_1(G)\to H_1(G,\Gamma)\to 0,$ and our surjection assumption,  yields the  vanishing of the  first
relative homology group $H_1(G,\Gamma),$  and thus  the isomorphism.
\end{proof}

The  construction in Definition \ref{def universal} below,
extends the   Schur  universal
central extension of a perfect group $G$ (the case $\gC=1$), to any map between groups
(not necessarily perfect groups) $f\colon \gC\to G,$ inducing surjection on abelianizations.

\begin{Def}\label{def universal}
Assume that $f$ induces a surjection $f_{ab}\colon \gC_{ab}\to G_{ab}$.
We define the {\it universal central $f$-extension of $G$}
as a pair $(U,\eta)\in {\rm Ext}^{\gC}(G; H_2(G,\gC))$ corresponding to the
identity map in ${\rm Hom}(H_2(G,\gC), H_2(G,\gC))$ (see Corollary \ref{cor surjection}).
\end{Def}

\noindent
{\bf Examples:}
\ Let the map $f$ be the abelianization $ f:\gC\to \gC_{ab}.$
Then the universal central $f$-extension is the following with the canonical structure map $\gC\to \gC/\gamma_3\gC$:
\[
\gamma_2\gC/\gamma_3\gC\to \gC/\gamma_3\gC\to \gC_{ab}.
\]
Of course, the higher quotients give analogous examples.

If the map $f$ is a central extension then the  associated universal central $f$-extension is $f$ itself with the identity as the structure map,
in this case $H_2(G,\gC)$ is canonically isomorphic  to the kernel of $f.$
Thus  for a quotient map of abelian groups $f: A\twoheadrightarrow B\cong A/K, $ we recover this quotient map as the universal  central $f$-extension.
Given a general surjective map $f:\gC\twoheadrightarrow G,$ the universal central $f$-extension is given by $K/[K,\gC]\to \gC/[K,\gC]\to G,$ with the natural quotient as the structure map. Of course if $\gC=1$ the trivial group we recover the Schur universal central extension of the perfect group $G.$

\medskip
When compared with the long exact homology sequence for a pair $G, \Gamma,$ it follows  that for any surjection of groups $\Gamma\to G$ with kernel $K$
we have naturally $H_2(G,\Gamma)\cong K/[K,\Gamma].$   The usual five terms   exact sequence for a group extension follows. When applied to a generators-relations  presentation of a group $G\cong F/R$
as a quotient of free group (see below)  one gets as  is well known, the standard version of Hopf formula for $H_2(G)$ as the kernel of $K/[R,F]\to Ker( F_{ab}\to G_{ab})$
which is clearly isomorphic to $R\cap[F,F]/[R,F]$ by a simple diagram chase.

\medskip
To get more interesting examples one can take any group map $\Gamma\to P,$ where $P$ is perfect:

\subsection*{\bf 2.10\ A relation to extensions of perfect groups and homological obstruction to lifting solutions.}\label{obstruction}
In this section we consider relative extensions of a perfect group $P$. The universal $f$-extension allows one to define obstructions to the lifting of
solutions of equations over $P$ to solutions over the universal Schur extension. Our discussion refers to the diagram
\ref{lifting problem} below.

 Given a perfect group $P,$
any map $f:\Gamma\to P$ is surjective on the abelian quotients, and therefore we can freely apply the results  as above.
 Take $\Gamma =C$ a cyclic group and consider a map $f_x: C\to P $ determined by  an element $x\in P.$
 Let $U_x$ be   the universal $f_x$-extension. We have $H_2(C)=0$ and
thus an exact sequence
\[\tag{$\star$}
0\to H_2(P)\to H_2(P,C) \to C \to 0.
\]
Let $E$ be the universal Schur extension of $P$ with kernel $H_2(P).$ By universality of $E$ one has a map of central extension  $e:E\to U_x$  (ignoring $f_x$).

\medskip

Let us consider the possibility of lifting the map $f_x:C\to P$ to $E.$ For $C=C_n,$ a cyclic group of order  $n,$ this would mean lifting
 an element of order $n$ in $P$ to an element of the same  order in $E$ which, in general, is impossible.

 \begin{equation}\label{lifting problem}
\xymatrix{
0\ar[r] &H_2(P)\ar[r]\ar@{^{(}->}[dd]_{1-1} & E\ar[rr]\ar@{^{(}->}[dd]^e &&P\ar[r]\ar[dd]^{=} & 1\\
&&&\ar[ru]^{f_x}C\ar[rd]_{f_x}\ar[lu]_{\tilde f_x}\ar[ld]\\
0\ar[r]&H_2(P,C)\ar@{->>}[d]\ar[r]&U_x\ar[rr]\ar@/^10pt/@{->>}[uu]^{u_x}\ar@{->>}[d] && P\ar[r]&1\\
&C\ar[r]^=&C\\}
\end{equation}
 Assume that $\tilde f_x$ is a lift of $f_x$ to $E.$  Then both extensions are equipped with a map from $\Gamma=C$ over $P,$ and  thus are $f_x$-extensions. The universality of $U_x$ now gives a map of $f_x$-extensions  $u_x:U_x\to E$ left inverse to $e.$
If $C=\zz,$  the integers, a lift $\tilde f_x$ as assumed always exists. In this case  the homology  sequence above  splits and
the { kernel of the universal   central $f_x$-extension  $U_x$ is isomorphic to $H_2(P)\oplus \zz.$ } Since $U_x$ is by itself an extension
of the Schur extension  $E$ of $P,$ it must split by the  universal properties of $U_x$ and $E$  hence  the
universal extension splits as:   $U_x\cong E\oplus \zz.$  The structure map $\zz\to E\oplus \zz$ corresponds to the choice $\tilde f_x$ of the lift of $x\in G$ to  $E.$

When $C$ is a {\em finite} cyclic group, the sequence $(\star)$ above does not split in general but under our assumption
 of the existence of a lift  $\tilde f_x$  we conclude  that $U_x \to E$  has a section  and thus $(\star)$ does split.
 We conclude that {\em the splitting of $(\star)$ is a necessary condition for the existence of a  lift $f_x.$ } In other words the
 extension sequence $(\star)$ is an obstruction element in $Ext(C, H_2(P))$ to the lifting problem of $x\in P.$

Similarly, for any map $f:\Gamma\to P,$ and  let $U$ be the universal central $f$-extension. Assume that $f$  lifts to $\tilde f: \Gamma\to E,$
 then,  one must have a splitting of $U\to E$ as argued above,
 since both  $E$ and $U$ have their  initial-universal properties.   Note that the canonical map $e:E\to U$  {\em is not} a map of central $f$-extension while $u:U\to E$ is such a map, thus we have
 $e\circ u=Id_E$ but in general $u\circ e$ is not the identity on $U.$

\medskip

Looking at the kernels of the extensions one gets that $H_2\Gamma\to H_2 P$ is the zero map.

\medskip

Consider for example a map $f:\Gamma=\zz\oplus \zz\to P $  given by a choice of
two commuting elements   $x,y\in P.$  This map, in general,  cannot be lifted to the Schur extension $E$ of $P.$
We look for a necessary condition for such a lift to exists, namely for an obstruction for lifting this pair of commuting elements
in $P$ to commuting elements in $E.$
Assume we have a lift   of $f$ i.e. $\tilde f: \zz\oplus\zz\to E.$

A lift gives a map $U\to E$ by universality of $U,$ thus getting a {\em split}  short exact sequence, since
$E\to U$ always exists by the universality of $E:$
\[
1\to \zz\oplus\zz\to U \to E\to 1.
\]
This means that the natural map $H_2(\Gamma)=H_2(\zz\oplus\zz)=\zz \to H_2(P)$ induced by the map $f$
 must be zero, as argued above. Hence the relevant element in the  second homology is an obstruction to the lift:  The injection $E\to U$ induces a injection on the kernels of the extensions: $H_2(P)\hookrightarrow H_2(P,\zz\oplus\zz).$

  More generally, starting with  elements $x_i \in G$ satisfying  equations $w_a(x_1,x_2...)=1$  one gets a homological
 obstruction for lifting $x_i$ to   elements  $e_i\in E$ satisfying the same conditions $w_a(e_i)=1$  taking $\Gamma=Free/w_a$
 namely the vanishing of $H_2(\Gamma)\to H_2 (P),$ where $H_2(\Gamma)$ can be given explicitly in terms of these relations.

\subsection*{\bf 2.11 The universality of U}\hfill
\medskip

\noindent
We now turn to  the universal $f$-extension $U$ --- a relative version of the universality of
the  Schur central extension of perfect groups.

\setcounter{prop}{11}
\begin{thm}\label{thm f-central}
Assume that the map $f_{ab}\colon \gC_{ab}\to G_{ab}$ induced on the abelianizations is
surjective, and let $(U,\eta)$ be the universal central $f$-extension of Definition \ref{def universal}.
Then for any central $f$-extension  $(E,\psi)$ of $G$
there is a unique map of central $f$-extensions (as in Definition \ref{def f-central}(2))
from $U$ to $E$. The map on the abelianization $\Gamma_{ab}\to U_{ab}$ is surjective.
\end{thm}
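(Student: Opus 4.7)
My plan is to combine Corollary \ref{cor surjection} with an internal computation in $U$: first use the corollary to produce at least one map $U\to E$ for each central $f$-extension $(E,\psi)$, then prove that $\eta_{ab}$ is surjective by identifying an internal subgroup of $U$ as itself a universal central $f$-extension, and finally derive uniqueness from that surjectivity. For \emph{existence}, given $(E,\psi)\in{\rm Ext}^{\gC}(G;A)$, Corollary \ref{cor surjection} represents it by a unique $h_E\colon H_2(G,\gC)\to A$. Pushing $(U,\eta)$ out along $h_E$ yields a central $f$-extension $E'$ with kernel $A$; by naturality of the classification together with the defining property that $(U,\eta)$ corresponds to $\mathrm{id}_{H_2(G,\gC)}$, the pushout $E'$ is equivalent to $(E,\psi)$ as an $f$-extension. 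The pushout carries a tautological morphism $U\to E'$, and composition with any such equivalence $E'\to E$ delivers the required map of central $f$-extensions.

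\emph{Surjectivity of $\eta_{ab}$.} Let $V\subseteq U$ be the normal subgroup generated by $\eta(\gC)$ and $[U,U]$, which one checks is precisely the preimage of $\im(\eta_{ab})\subseteq U_{ab}$. Since $f_{ab}$ is surjective, every $g\in G$ decomposes as $f(\gc)\cdot g'$ with $g'\in[G,G]$, and lifting $g'$ into $[U,U]$ along $U\onto G$ shows both that $V\to G$ is surjective and that $U=V\cdot H_2(G,\gC)$. Consequently $(V,\eta)$ is itself a central $f$-extension of $G$ with kernel $V\cap H_2(G,\gC)$, and the inclusion $V\hookrightarrow U$ is a morphism of central $f$-extensions which on kernels is the inclusion $\iota\colon V\cap H_2(G,\gC)\hookrightarrow H_2(G,\gC)$. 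Pushing $(V,\eta)$ out along $\iota$ now produces a central $f$-extension with kernel $H_2(G,\gC)$ that is equivalent, via the pushout universal property and the five lemma, to $(U,\eta)$. Because $(U,\eta)$ corresponds by definition to $\mathrm{id}_{H_2(G,\gC)}$, naturality of Corollary \ref{cor surjection} in the coefficient group forces $\iota$ to be a retraction, and hence, being injective, an isomorphism. Thus $H_2(G,\gC)\subseteq V$, which combined with $U=V\cdot H_2(G,\gC)$ gives $V=U$ and the surjectivity of $\eta_{ab}$.

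\emph{Uniqueness.} Given two maps of central $f$-extensions $u_1,u_2\colon U\to E$, the assignment $d(x):=u_1(x)u_2(x)^{-1}$ takes values in the central kernel $A=\ker(E\to G)$ and hence defines a group homomorphism $d\colon U\to A$. By hypothesis $d\circ\eta\equiv 1$, so $d$ factors through $U_{ab}/\im(\eta_{ab})$, which vanishes by the previous paragraph; therefore $u_1=u_2$.

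The principal obstacle is the proof of surjectivity of $\eta_{ab}$: one has to view the subgroup $V$ simultaneously as an internally defined normal subgroup of $U$ and as a central $f$-extension of $G$ in its own right, and then exploit the naturality of Corollary \ref{cor surjection} to pin down $\iota$ as an isomorphism. Existence reduces immediately to the corollary via a pushout, and uniqueness then follows from the standard difference argument that leverages the centrality of the kernel.
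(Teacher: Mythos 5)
Your proof is correct. The existence step coincides with the paper's: both realize $(E,\psi)$ as the pushout of $(U,\eta)$ along the classifying homomorphism $h_E$ furnished by Corollary \ref{cor surjection}, using naturality of that isomorphism in the coefficient group. Where you genuinely diverge is in the other two claims and in their logical order. The paper first extracts uniqueness of the map $U\to E$ from the assertion that each class occurs uniquely as a pushout of $(U,\eta)$, and only afterwards deduces surjectivity of $\eta_{ab}$ \emph{from} that uniqueness (comparing the two morphisms $a\times u$ and $0\times u$ into the trivial extension $A\times G$), with a relative five-term exact sequence offered as a second proof. You reverse the order: you prove surjectivity of $\eta_{ab}$ directly by an internal argument, showing that $V=\eta(\gC)\cdot[U,U]$ (automatically normal since it contains $[U,U]$, and surjecting onto $G$ because $f_{ab}$ is onto) is itself a central $f$-extension of $G$ whose pushout along $\iota\colon V\cap H_2(G,\gC)\hookrightarrow H_2(G,\gC)$ recovers $(U,\eta)$; naturality then forces $\iota\circ h_V=\mathrm{id}$, so $\iota$ is onto and $V=U$. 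Uniqueness then follows from the standard difference-homomorphism argument $d(x)=u_1(x)u_2(x)^{-1}$, which lands in the central kernel and dies on $\eta(\gC)\cdot[U,U]=U$. Your ordering has a real advantage: it makes uniqueness an honest corollary of $U=\eta(\gC)\cdot[U,U]$ rather than of the pushout classification, where the passage from ``unique classifying homomorphism'' to ``unique morphism of $f$-extensions'' is left implicit in the paper, and it removes the apparent circularity of invoking uniqueness to prove surjectivity. What the paper's route buys is brevity and, in its second proof, a purely homological argument via the relative five-term sequence that generalizes beyond this situation.
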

\begin{proof}
By Corollary \ref{cor surjection}, there is a $1-1$ correspondence between central $f$-extensions
of $G$ with kernel $A$ and ${\rm Hom}(H_2(G,\gC),A)$.  Furthermore

\begin{lemma}\label{lem universal}
The isomorphism of Corollary  \ref{cor surjection} can be built for the universal $f$-extension $(U,\eta)$ by  taking $e\in {\rm Hom}(H_2(G,\gC), A)$
to the element $(E,\psi)$ in ${\rm Ext}^{\gC}(G;A)$    as in the
following pushout diagram, where $E$ is the pushout of $\iota$ and $e,$
and $\psi=\eta\circ\gt$.
\begin{equation}\label{diag pushout}
\xymatrix{
0\ar[r] &H_2(G,\gC)\ar[r]^{\iota}\ar[dd]_e & U\ar[rrd]^n\ar[dd]_{\tau} \\
&&&\Gamma\ar[lu]^{\eta}\ar[ld]_{\psi}\ar[r]^<<f &G\ar[r] &1\\
0\ar[r]&A\ar[r]&E\ar[rru]_{m}\\
}
\end{equation}
\end{lemma}
\begin{proof}
By naturality, given any map $a\in{\rm Hom}(H_2(G,\gC), A),$ the following diagram commutes
\begin{equation}\label{diag nat push}
\xymatrix{
{\rm Ext}^{\gC}(G,H_2(G,\Gamma))\ar[r]^<<<<{\cong}\ar[d]^{push\,\,a}&  {\rm Hom}(H_2(G,\Gamma), H_2(G,\Gamma))\ar[d]^{a \circ\,\text{--}}\\
{\rm Ext}^{\gC}(G,A)\ar[r]^<<<<<<<<{\cong}&{\rm Hom}(H_2(G,\Gamma), A)\\
}
\end{equation}
where the map ``push $a$'' is the map obtained by a pushout diagram as in
diagram \eqref{diag pushout} above, with $(U,\eta)$ replaced by some element
in ${\rm Ext}^{\gC}(G,H_2(G,\Gamma)),$ and where $e$ is replaced by $a$.  The map $a\circ$\,-- is composition with $a$.

Let $(M,\psi)\in {\rm Ext}^{\gC}(G,A)$. It corresponds uniquely to $a\in {\rm Hom}(H_2(G,\Gamma), A)$.
Now consider diagram \eqref{diag nat push} with {\it that map}  $a$.  Then $a=a\circ{\rm id}_{H_2(G,\gC)},$ i.e, $a$ comes from the identity
in  ${\rm Hom}(H_2(G,\Gamma), H_2(G,\Gamma)),$ and we see that $(M,\psi)$ occurs as a pushout
of $(U,\eta)$.
\end{proof}

Lemma \ref{lem universal}
shows that each $f$-central
extension $(E,\psi)$ of $G$ with kernel $A$ occurs uniquely as a pushout
of $e\in {\rm Hom}(H_2(G,\gC),A)$.  The desired map $U\to E$ is the map $\gt$ of diagram \eqref{diag pushout}.
\end{proof}

\begin{remark}
The results of this section were used in \cite[Theorem 5.4]{FS} to identify
the kernel of the free normal closure $\gC^f,$ in the case where $G=\lan f(\gC)^G\ran$.
\end{remark}

In the above paragraphs we have proved the main part of theorem \ref{thm f-central}.
To conclude the proof of Theorem \ref{thm f-central}  we consider a basic property of  the universal extension  that extends a familiar
property of the universal central extension of a perfect group:

\begin{lemma}\label{lem surj on ab}
If $f$ induces a surjection on abelianization then the same is true for the universal extension $(U,\eta)$:  the  associated structure  map
of the extension
$\eta: \Gamma\to U$ is also surjective on the abelian quotients.
\end{lemma}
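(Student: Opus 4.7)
The plan is to leverage the universal property of Theorem~\ref{thm f-central} (uniqueness of the map out of $(U,\eta)$) by testing it against the split central $f$-extension of $G$ with trivial structure map.

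\textbf{Reduction.} First, I would reformulate the goal dually. The map $\eta_{ab}:\gC_{ab}\to U_{ab}$ is surjective if and only if every homomorphism $\chi\colon U\to A$, with $A$ an abelian group, which vanishes on the image of $\eta$, is itself zero. (Applying this to the canonical quotient $U\to U_{ab}/\eta_{ab}(\gC_{ab})$ forces the cokernel to vanish.) So I reduce to proving: $\chi\circ\eta=0$ implies $\chi=0$.

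\textbf{Two maps into a trivial extension.} Given such a $\chi$, consider the split central $f$-extension $(A\oplus G,\psi_0)$ of Example~\ref{eg equivalence}(3), with $\psi_0(\gc)=(0,f(\gc))$. I would then exhibit two distinct-looking maps of central $f$-extensions from $(U,\eta)$ to $(A\oplus G,\psi_0)$:
\[
\mu_0\colon u\mapsto (0,n(u)),\qquad \mu_\chi\colon u\mapsto (\chi(u),n(u)).
\]
Both are group homomorphisms over $G$ (using that $A$ is abelian and central in $A\oplus G$), and both satisfy the compatibility $\mu\circ\eta=\psi_0$: for $\mu_\chi$ this uses precisely the hypothesis $\chi\circ\eta=0$, so that $\mu_\chi(\eta(\gc))=(\chi\eta(\gc),f(\gc))=(0,f(\gc))=\psi_0(\gc)$.

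\textbf{Conclusion via uniqueness.} Now I apply the uniqueness clause of Theorem~\ref{thm f-central}: there is a \emph{unique} map of central $f$-extensions $(U,\eta)\to(A\oplus G,\psi_0)$. Hence $\mu_0=\mu_\chi$, which gives $\chi(u)=0$ for all $u\in U$, i.e.\ $\chi=0$. Taking $A=U_{ab}/\eta_{ab}(\gC_{ab})$ and $\chi$ the canonical projection then forces $A=0$, proving surjectivity of $\eta_{ab}$.

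\textbf{Expected difficulty.} I do not anticipate a substantive obstacle: the argument is a near-verbatim transposition of the classical proof that the Schur cover of a perfect group is perfect, adapted to the relative setting by using the trivial \emph{$f$-}extension (with $\psi_0$) rather than the trivial extension alone. The only point requiring a brief verification is that $\mu_\chi$ is genuinely a homomorphism of central $f$-extensions, which comes down to checking the structure-map compatibility above; this is exactly where the hypothesis $\chi\circ\eta=0$ is used.
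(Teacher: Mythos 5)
Your argument is correct and is essentially the paper's own (first, ``direct'') proof: the paper likewise reduces to showing that any $\chi\colon U\to A$ with $\chi\circ\eta=0$ vanishes, and then compares the two maps $u\mapsto(\chi(u),n(u))$ and $u\mapsto(0,n(u))$ into the trivial $f$-extension $(A\times G,\psi_0)$, concluding by the uniqueness clause of Theorem~\ref{thm f-central}. (The paper also offers a second proof via a relative five-term exact sequence, but your route matches its primary one.)
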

\begin{proof}
Here we provide two proofs, one direct and one using  a 5-term exact sequence.

For a direct proof, first note that
$\eta_{ab}$ is surjective if and only if 
$\eta\mapsto \eta\circ\mu$ is an injective map  ${\rm Hom}(U,A)\to{\rm Hom}(\gC,A),$
for any abelian group $A$.
Now given a map $a: U\to A$ to any abelian group  whose pre-composition to $\Gamma$  is zero,
we  show that $a=0:$ Consider the trivial $f$-extension
of $G$ namely $A\times G;$
where this trivial extension is equipped with the  structure  map  $0\times f:\Gamma\to A\times G.$

Let $u\colon U\to G$ be the unique map of Theorem \ref{thm f-central}.  Now   construct  two maps  between these $f$-extensions $(U,\eta)$ and $(A\times G,0\times \eta),$ namely
the maps $a\times u , 0\times u: U\to A\times G.$

The condition on the pre-composition
  implies that these two maps {\em are maps of $f$-extensions} as defined above,
    since both are $0\times f$  on $\Gamma,$  commutativity  on $\Gamma$ follows.

   Thus both are well defined
maps of factorizations and therefore by uniqueness of maps  of factorization from the universal $(U,\eta),$  they are equal and we get $a=0$ as needed.
\medskip

Here is  a second proof using a {\em relative 5-terms exact sequence.}    Start with any central $f$-extension of the form $A \to E\to G$ with $\psi: \Gamma \to E$ as the structure map.  We have the  exactness of the following relative version of the usual
5-term  sequence.
\[\tag{$\star\,\star$}
H_2(E,\Gamma)\to H_2(G,\Gamma)\overset \partial\rightarrow A\to H_1(E,\Gamma)\to H_1(G,\Gamma)\to 0.
\]
To see this one can proceed algebraically  by chasing the relevant diagram of exact sequences, but here is a topological argument:

Comparing the exact sequence of the cofibration $BE\to BG\to BG/BE$ with
the usual 5-term exact sequence of lower homology groups  (i.e. with $\Gamma=1$ in $(\star\,\star)$   above)  implies (by the 5-lemma)
the isomorphism $ H_2(BG/BE) \cong A.$

Therefore the  homology exact sequence, in low dimensions, of the cofibration sequence involving three mapping cones:
\[
 BE/B\Gamma \to BG/B\Gamma \to BG/BE,
\]
yields the desired 5-terms  exact sequence in homology.

Now  consider the  5-terms exact sequence  in relative homology associated to the universal  extension sequence:
\[
0\to H_2(G,\Gamma)=A\to U\to G\to 1
\]
of the universal central $f$-extension
with the structure map $\Gamma\to U$:

We note that the  connecting boundary   map $\partial$  in the low homology sequence is an isomorphism by construction. We have by assumption
 that $H_1(G,\Gamma)\cong 0,$
we get immediately for the exact sequence the vanishing  $H_1(U,\Gamma)\cong 0,$ and thus surjectivity on the first homology as needed.
\end{proof}
\section{Universal factorizations}

Next we consider the universal properties   of $U_\infty,$ as stated in   theorem \ref{thm uni U}  of the introduction:
 Characterizing the factorization $\Gamma \to U_\infty\to G$ as both initial and terminal.

\subsection*{3.1\ Relative Schur Tower} In light of Lemma \ref{lem surj on ab},
we can repeat the construction of the universal central $f$-extension.
We get a tower
\[
 \Gamma \to U_\infty\to \cdots U_{n+1}\to U_n\to   \cdots   U_1=U\to U_0=G.
\]
As we will presently see,
for finite groups,   it stop  after a finite number of steps at {\em the universal  hypercentral extension of $G.$}

\setcounter{prop}{1}
\begin{prop}
The above tower stops: For any map $f\colon\gC\to G$ of finite groups as above, with $f_{ab}$ surjective,
the inverse limit $U_\infty$ is a finite group which is
equal to $U_n$ for all sufficiently large $n.$
\end{prop}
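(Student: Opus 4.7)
The plan is to prove (i) each $U_n$ is finite, (ii) the orders $|U_n|$ are uniformly bounded in $n$, and (iii) deduce stabilization.  Since each $U_{n+1}\twoheadrightarrow U_n$ is a surjection, the sequence $(|U_n|)$ is non-decreasing, so (iii) follows at once from (i) and (ii).

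For (i) I would proceed by induction on $n$, with base $U_0=G$.  In the inductive step, the kernel of $U_{n+1}\twoheadrightarrow U_n$ is the relative group $H_2(U_n,\Gamma;\zz)$, which sits in the five-term exact sequence
\[
H_2(\Gamma)\to H_2(U_n)\to H_2(U_n,\Gamma)\to H_1(\Gamma)\to H_1(U_n)\to 0
\]
(using $H_1(U_n,\Gamma)=0$ from Lemma~\ref{lem surj on ab}).  All four outer terms are finite, being homology of finite groups, so $H_2(U_n,\Gamma)$ is finite, and hence so is $U_{n+1}$.

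For the main point (ii), I would first treat the case where $f\colon\Gamma\twoheadrightarrow G$ is surjective, with kernel $K$.  The paragraph following Definition~\ref{def universal} identifies $U_1=\Gamma/[K,\Gamma]$, with $\eta_1$ the (surjective) quotient map of kernel $[K,\Gamma]$.  Applying this explicit formula at each successive step yields
\[
U_n=\Gamma/\gamma_n(K;\Gamma),\qquad \gamma_0(K;\Gamma):=K,\quad \gamma_{n+1}(K;\Gamma):=[\gamma_n(K;\Gamma),\Gamma].
\]
Every $U_n$ is therefore a quotient of the finite group $\Gamma$, so $|U_n|\le|\Gamma|$, and the descending chain of subgroups $\gamma_n(K;\Gamma)$ of $\Gamma$ must stabilize after finitely many steps, giving $U_n=U_N$ for some $N$.

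The principal obstacle is the case in which $f$ is not surjective (although $f_{ab}$ is).  Here $|U_n|$ can exceed $|\Gamma|$---for instance, the inclusion $S_3\hookrightarrow S_4$ already produces $|U_1|=48>6=|\Gamma|$---so the quotient argument above does not apply directly.  My plan is to reduce to the surjective case via the factorization $\Gamma\twoheadrightarrow H\hookrightarrow G$ with $H:=f(\Gamma)$; by surjectivity of $f_{ab}$ one has $H\cdot[G,G]=G$.  Combining the hypercentrality of $K_n:=\ker(U_n\to G)$ (so $K_n\subseteq Z_n(U_n)$) with the identity $\eta_n(\Gamma)\cdot[U_n,U_n]=U_n$ from Lemma~\ref{lem surj on ab}, the aim is to realize $U_n$ as a quotient of a fixed finite group built from the surjective tower $\Gamma\twoheadrightarrow H$ together with the extension class of $G$ over $H$, producing a uniform bound $|U_n|\le C(|\Gamma|,|G|)$ and hence the desired stabilization.
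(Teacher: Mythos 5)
Your steps (i) and (iii) are sound, and your treatment of the surjective case is correct and in fact more explicit than anything in the paper: for $f\colon\Gamma\twoheadrightarrow G$ with kernel $K$, the identification $U_1=\Gamma/[K,\Gamma]$ from the paragraph following Definition~\ref{def universal} does iterate to $U_n=\Gamma/\gamma_n(K;\Gamma)$, every term is a quotient of the finite group $\Gamma$, and the descending chain $\gamma_n(K;\Gamma)$ must stabilize. The problem is that the general case --- which is the actual content of the proposition --- is only announced, and the announced reduction is unlikely to succeed in the form stated. The kernel of $U_{n+1}\to U_n$ is $H_2(U_n,\Gamma)$, which by your own five-term sequence is an extension involving the cokernel of $H_2(\Gamma)\to H_2(U_n)$; the term $H_2(U_n)$ is not controlled by any data extracted from $\Gamma$ and the surjective tower over $H=f(\Gamma)$. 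Your own example makes the point against you: for $S_3\hookrightarrow S_4$ the surjective tower over $H=S_3$ is trivial (the identity map has $H_2(S_3,S_3)=0$), yet $H_2(S_4,S_3)\cong\zz/2$, so $U_1$ is not a quotient of, nor visibly ``built from,'' anything the surjective tower produces; the extra $\zz/2$ comes from $H_2(G)$, i.e.\ from $G$ itself. A uniform bound $|U_n|\le C(|\Gamma|,|G|)$ is of course true \emph{a posteriori} (it is essentially equivalent to the proposition), but you have supplied no mechanism that yields it.

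For comparison, the paper does not attempt an order count at all: it invokes a slight generalization of Lemmas 4.4 and 4.5 of \cite{FS}, whose engine is the stated observation that a homomorphism into a nilpotent group which is surjective on abelianizations is surjective (a Frattini-quotient argument). The structural inputs are that $\ker(U_n\to G)$ is hypercentral in $U_n$, hence nilpotent, while $\eta_n\colon\Gamma\to U_n$ remains surjective on abelianizations by Lemma~\ref{lem surj on ab}; it is this interplay, rather than a bound on $|U_n|$, that forces the kernels $H_2(U_n,\Gamma)$ to vanish after finitely many steps. To complete your write-up you would need either to carry out an argument of that kind or to genuinely establish your uniform bound; as it stands, the non-surjective case is a gap.
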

To prove that the tower stops we use  a slight generalization of the argument given in Lemma 4.4 and Lemma 4.5 of \cite{FS}.
The same proof goes through using the following slight modification which  is a basic observation on nilpotent groups

\begin{lemma}
Let $g: G\to  N$ be a map  with $N$ being nilpotent, such that $g$ induces a surjection  on abelianization
then $g$ is surjective. In particular, if $g$ is also injective then $G=N.$
\end{lemma}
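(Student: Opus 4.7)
Set $H = g(G) \leq N$; the goal is to show $H = N$, which is equivalent to surjectivity of $g$. The hypothesis that $g_{ab}$ is surjective translates, via the exact sequence $[N,N] \hookrightarrow N \twoheadrightarrow N_{ab}$, into the single equality $H \cdot [N,N] = N$ inside $N$.

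I would argue by induction on the nilpotency class $c$ of $N$. For $c \leq 1$ one has $[N,N] = 1$, so $H \cdot [N,N] = H = N$ directly. For the inductive step let $Z = Z(N)$; then $N/Z$ is nilpotent of class $c-1$, and because the natural surjection $N \twoheadrightarrow N/Z$ induces a surjection $N_{ab} \twoheadrightarrow (N/Z)_{ab}$, the composite $G \to N \twoheadrightarrow N/Z$ still surjects on abelianizations. Applying the inductive hypothesis to this composite yields $H \cdot Z = N$.

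The crux closing the argument is that once $N = HZ$ with $Z$ central, every basic commutator of $N$ collapses to one in $H$: for $h_i \in H$ and $z_i \in Z$ one has $[h_1 z_1, h_2 z_2] = [h_1, h_2]$ since the central factors drop out. Therefore $[N,N] = [H,H] \leq H$, and combining this with $H \cdot [N,N] = N$ forces $H = N$. The ``in particular'' clause is then immediate, as an injective surjection of groups is an isomorphism.

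I do not foresee a genuine obstacle here; the proof is a short structural induction along the upper central series. The only point requiring a moment of care is verifying that the hypothesis ``surjective on abelianization'' is inherited by the quotient $N \twoheadrightarrow N/Z$ used in the inductive step, which comes down to functoriality of abelianization applied to a surjection.
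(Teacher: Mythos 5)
Your proof is correct, but it takes a genuinely different route from the paper's. Both arguments begin the same way, reducing the hypothesis to the equality $H\cdot[N,N]=N$ for $H=g(G)$. The paper then finishes in one stroke via Frattini theory: $[N,N]\le\Phi(N)$ for nilpotent $N$, so $H\Phi(N)=N$, and the non-generator property of $\Phi(N)$ gives $H=N$. You instead induct on the nilpotency class along the upper central series: the quotient $N\to N/Z(N)$ preserves surjectivity on abelianizations, the inductive hypothesis gives $HZ(N)=N$, and the computation $[hz,h'z']=[h,h']$ for central $z,z'$ yields $[N,N]=[H,H]\le H$, whence $H=H[N,N]=N$. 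Your version is more elementary and self-contained, and it is actually more robust: the Frattini argument as literally stated relies on every proper subgroup being contained in a maximal one, which can fail for infinite nilpotent groups (e.g.\ $\Phi(\qq)=\qq$), whereas your induction proves the lemma for arbitrary nilpotent $N$ exactly as stated. The paper's argument buys brevity and is perfectly adequate in the finite setting where the lemma is applied.
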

\begin{proof}
The only property of the  nilpotent group $N$ used here is that its Frattini quotient $N/\Phi(N)$ is an abelian group.
We can assume $G$ is a subgroup of $N$ by looking at its image under $f$. The assumption of surjection $G \twoheadrightarrow N_{ab}$
implies that $G$ together with the commutator subgroup of $N$ generate $N,$ i.e. $N=[N,N]G$.
Since $[N,N]\le\Phi(N),$ we get    $G\Phi(N)=N.$ But by the basic property of $\Phi(N)$ this means $G=N.$\qedhere
\end{proof}

\noindent
{\bf Remark:}
Topologically the  relative Schur tower  is gotten by repeatedly taking the  homotopy  fibre of the composition:
\[
BG\to BG/B\Gamma\to P_2( BG/B\Gamma )
\]
as a sort of ``relative modified Bousfiled-Kan homology completion tower".
Moreover, this last construction is well defined without any assumption of $\Gamma\to G.$

\begin{prop}
Let $f\colon\gC\to G$ be a map of finite groups, surjective on abelian quotients.
The limit  $U_\infty$  of the tower $U_i$ of repeated  universal central $f$-extensions is
{\em terminal} among all   factorizations of $\Gamma \to E  \twoheadrightarrow G$  of  $f$
which are low-dimensional-acyclic factorization namely with $H_i(E,\Gamma)=0$ for $i=1,2.$
\end{prop}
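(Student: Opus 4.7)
The plan is to produce, by induction along the tower, a unique compatible sequence of maps of factorizations $\phi_n\colon E\to U_n$; its eventual stabilization at $U_\infty=U_N$ yields the required unique map $E\to U_\infty$ of factorizations under $\Gamma$ over $G$. Existence at each step will use the vanishing of $H_2(E,\Gamma)$ to split a pullback central extension; uniqueness will use the vanishing of $H_1(E,\Gamma)$ to kill the difference of two lifts.

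For existence, take $\phi_0\colon E\twoheadrightarrow G$ to be the given surjection, and suppose inductively that a map of factorizations $\phi_n\colon E\to U_n$ has been produced. Then $\phi_n$ makes $E$ into a factorization of the structure map $\Gamma\to U_n$, so one can form the pullback
\[
E':=E\times_{U_n}U_{n+1}.
\]
Since $U_{n+1}\twoheadrightarrow U_n$ is a central extension with abelian kernel $H_2(U_n,\Gamma)$, so is $E'\twoheadrightarrow E$; equipped with the canonical structure map $\Gamma\to E'$ induced by $\eta_{n+1}\colon\Gamma\to U_{n+1}$ and $\Gamma\to E$, it is a central $\Gamma$-extension of $E$ in the sense of Definition \ref{def f-central} applied to the map $\Gamma\to E$. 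The hypothesis $H_1(E,\Gamma)=0$ makes Corollary \ref{cor surjection} applicable to $\Gamma\to E$, yielding
\[
{\rm Ext}^{\Gamma}(E;H_2(U_n,\Gamma))\cong {\rm Hom}(H_2(E,\Gamma),H_2(U_n,\Gamma))=0
\]
by the assumed vanishing of $H_2(E,\Gamma)$. So $E'\twoheadrightarrow E$ is equivalent to the neutral split extension of Example \ref{eg equivalence}(3); in particular it admits a group-theoretic section $s\colon E\to E'$ compatible with the structure map from $\Gamma$. Composing $s$ with the projection $E'\to U_{n+1}$ yields $\phi_{n+1}\colon E\to U_{n+1}$, a map of factorizations first over $U_n$ and hence over $G$.

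For uniqueness, argue by induction on $n$, the base $n=0$ being trivial since $\phi_0$ is the prescribed map $E\twoheadrightarrow G$. Given two maps $\phi,\phi'\colon E\to U_{n+1}$ of factorizations over $G$ under $\Gamma$, their composites with $U_{n+1}\twoheadrightarrow U_n$ give maps $E\to U_n$ that coincide by induction. Thus $\phi$ and $\phi'$ are two lifts of a common map along the central extension $U_{n+1}\twoheadrightarrow U_n$ with central abelian kernel $Z:=H_2(U_n,\Gamma)$. The set-theoretic map $\delta(x):=\phi(x)\phi'(x)^{-1}$ lands in $Z$, and since $Z$ is central it is a group homomorphism $E\to Z$. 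Agreement of $\phi$ and $\phi'$ with the structure map on $\Gamma$ forces $\delta$ to vanish on the image of $\Gamma$; since $Z$ is abelian, $\delta$ factors through $E_{ab}/{\rm im}(\Gamma_{ab})$, which is trivial by the hypothesis $H_1(E,\Gamma)=0$. Hence $\phi=\phi'$.

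Since the tower of universal central extensions stabilizes at $U_\infty=U_N$ by the preceding proposition, the sequence $(\phi_n)$ terminates in the unique desired map $E\to U_\infty$. The main obstacle I anticipate is the existence step: one must verify that the section $s$ produced by the vanishing of ${\rm Ext}^{\Gamma}$ is a section of \emph{$\Gamma$-extensions} (not merely of group extensions), so that $\phi_{n+1}$ is genuinely a map of factorizations under $\Gamma$. Once one unwinds what equivalence to the neutral element of Example \ref{eg equivalence}(3) demands, this compatibility does hold, and the remainder of the argument is formal.
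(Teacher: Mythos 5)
Your proof is correct, and it reaches the conclusion by a more explicit route than the paper's. The paper argues globally: it forms the universal central extension tower $W_\bullet$ of the map $\Gamma\to M$ (here $M=E$) itself, observes that $W=M$ because the kernel of $W\to M$ is $H_2(M,\Gamma)=0$, and then invokes naturality of the universal construction in the base to push $W_\infty=M$ into the tower over $G$; uniqueness is dispatched in one line. You instead run a direct induction up the tower, at each stage pulling back $U_{n+1}\twoheadrightarrow U_n$ along $\phi_n$ and splitting the resulting central $\Gamma$-extension of $E$ via
\[
{\rm Ext}^{\Gamma}(E;H_2(U_n,\Gamma))\cong {\rm Hom}(H_2(E,\Gamma),H_2(U_n,\Gamma))=0 .
\]
This is in fact the computation that makes the paper's ``naturality'' step work (the natural map $W\to U$ is obtained by exactly such a pullback), so the two arguments share their engine; what your version buys is a precise accounting of where each hypothesis enters --- $H_1(E,\Gamma)=0$ to make Corollary \ref{cor surjection} applicable to $\Gamma\to E$ and $H_2(E,\Gamma)=0$ to kill the obstruction --- together with a genuinely complete uniqueness argument: the difference cocycle $\delta(x)=\phi(x)\phi'(x)^{-1}$ into the central kernel is a homomorphism vanishing on the image of $\Gamma$, hence factors through $\mathrm{coker}(\Gamma_{ab}\to E_{ab})=H_1(E,\Gamma)=0$. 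This is cleaner than the paper's remark that uniqueness ``follows using $H_2(U_\infty,\Gamma)=0$'', which points at the wrong group; uniqueness of maps out of $E$ is controlled by $H_1(E,\Gamma)$, exactly as you use it. Your closing caveat about checking that the section respects the structure maps is the right point to flag, and your resolution is correct: equivalence to the neutral element of Example \ref{eg equivalence}(3) transports its canonical compatible section to one for $E'$.
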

\smallskip

\noindent
The  initial extension here  is the trivial $\Gamma\to M=\Gamma \to G$.

Dually one has
\begin{prop}
Let $f\colon\gC\to G$ be a map of finite groups, surjective on abelian quotients.
The limit  $U_\infty$  of the tower $U_i$ of repeated  universal central $f$-extensions is {\em initial} among all  hypercentral 
factorization
$f:\Gamma \to M  \twoheadrightarrow  G$  of  maps $f:\Gamma \to G$. In addition it is lower acyclic namely,
$H_i(U_\infty,\Gamma)=0$ for $i=1,2.$
\end{prop}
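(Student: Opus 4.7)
The plan is to verify the two claims separately: the low-dimensional vanishing $H_i(U_\infty,\gC)=0$ for $i=1,2$, and the initial universal property. The crucial input for both is the stabilization $U_\infty=U_N$ for some $N$, established in the preceding proposition.

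For the vanishing: applying Lemma~\ref{lem surj on ab} at each rung of the tower, the structure map $\eta_k:\gC\to U_k$ is surjective on abelianization for every $k$, so this also holds for $\eta_\infty:\gC\to U_\infty$. The long exact sequence of the pair $(U_\infty,\gC)$ in first homology then gives $H_1(U_\infty,\gC)=0$. For $H_2$: the stabilization $U_{N+1}=U_N$ says exactly that the universal central $(\gC\to U_N)$-extension of $U_N$ has trivial kernel, and by Definition~\ref{def universal} together with Corollary~\ref{cor surjection} this kernel is precisely $H_2(U_N,\gC)=H_2(U_\infty,\gC)$.

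For initiality, let $\psi:\gC\to M\onto G$ be any hypercentral $f$-extension, written as a composition $M=E_n\to E_{n-1}\to\cdots\to E_0=G$ of central extensions, with induced structure maps $\psi_k:\gC\to E_k$. I construct morphisms of $f$-extensions $\phi_k:U_k\to E_k$ by induction on $k$, starting from $\phi_0=\mathrm{id}_G$. Given $\phi_k$, I form the pullback
\[
E_{k+1}':=E_{k+1}\times_{E_k}U_k,
\]
a central extension of $U_k$ with the same kernel as $E_{k+1}\to E_k$. The pair $(\psi_{k+1},\eta_k):\gC\to E_{k+1}'$ is well-defined precisely because $\phi_k\circ\eta_k=\psi_k=\pi\circ\psi_{k+1}$ (where $\pi:E_{k+1}\to E_k$), and it endows $E_{k+1}'$ with the structure of a central $(\gC\to U_k)$-extension of $U_k$. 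Theorem~\ref{thm f-central} then furnishes a unique morphism of $(\gC\to U_k)$-extensions $U_{k+1}\to E_{k+1}'$, and composing with the projection to $E_{k+1}$ yields $\phi_{k+1}$; a short diagram chase shows this is a map of $f$-extensions of $G$. For $N\ge n$, composing with the canonical projection $U_\infty=U_N\to U_n$ produces the desired $U_\infty\to M$.

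Uniqueness is obtained by a parallel induction on $n$. If $\alpha,\beta:U_\infty\to M$ are two maps of $f$-extensions, by inductive hypothesis their compositions with $E_n\to E_{n-1}$ coincide, so their difference is a map $\delta:U_\infty\to A$ with $A:=\ker(E_n\to E_{n-1})$ an abelian central subgroup. Since both $\alpha,\beta$ restrict to $\psi_n$ on $\gC$, $\delta$ vanishes on $\eta_\infty(\gC)$, and since $A$ is abelian it factors through $(U_\infty)_{ab}$; because $(\eta_\infty)_{ab}$ is surjective by the first part, this forces $\delta=0$. I expect the main difficulty to be the bookkeeping in the inductive construction: one must take care that the universal property of $U_{k+1}$ (formulated relative to the map $\gC\to U_k$, not to the original $f$) produces a morphism simultaneously respecting the structure maps from $\gC$ and the projections down to $G$. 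The pullback $E_{k+1}'$ is the device that reduces this two-fold compatibility to a single application of Theorem~\ref{thm f-central}.
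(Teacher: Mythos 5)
Your proof is correct and follows essentially the same route as the paper's: induction up the tower, using the one-step universality of the universal central extension at each rung (your pullback $E_{k+1}'=E_{k+1}\times_{E_k}U_k$ is the precise device behind the paper's terse ``by the universality of $U$ we also have a map $U_{N+1}\to E_{i+1}$''), together with the stabilization $U_{N+1}=U_N$ for the acyclicity and the surjectivity of $(\eta_\infty)_{ab}$ for uniqueness. Your write-up is considerably more detailed than the paper's sketch, but there is no difference in method.
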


The terminal hypercentral extension is of course $\Gamma\to M=G\to G.$
\medskip

\noindent

\begin{proof}[Proofs of universality]
To prove the initial property of $U_\infty$ we use of course that $U=U_1$ itself is initial
among all central $f$-extensions. Now we proceed by induction. Given any hypercentral extension  of $G$ it comes with a finite tower
$E_n\to E_{n-1}\to\cdots\to  E_1 \to G,$ where  $E_{i+1}\to E_i$ etc.  is a central $f$-extension. Now we assume we have a map $U_\infty=U_N$ to $E_i.$
then by the universality of $U$ we also have a map $U_{N+1}\to  E_{i+1}$. But we know that $U_{N+1}=U_N=U_\infty.$ Hence  we get a map of
hypercentral $f$-extensions. Uniqueness follows similarly, compare with \cite{FS}.

We turn to the terminal property of $U_\infty.$  Let $\Gamma\to M\to G$ be a factorization by some $f$-extension with $H_i(M,\Gamma)=0$ for
$i=1,2$.  Consider the universal factorization  $\Gamma\to W_\infty \to M$ of $\Gamma\to M.$ It maps to $U_\infty.$ The map $M\to G$ induces by naturality a map
$W_\infty\to U.$ But we saw above that the kernel of $W\to M$ is $H_2(M,\gamma)= 0$ by assumption on $M.$ Therefore  $W=M$ we get a map $M\to U.$
repeating this we  get a map  $M\to U_\infty.$ Uniqueness follows by a similar argument using $H_2(U_\infty,\Gamma)=0.$
\end{proof}
\medskip

\noindent
{\bf A remark on a relation between $U_\infty$ and $\Gamma_\infty$:}

\medskip

Together with the $\Gamma_\infty$ factorization of \cite{FS} we have  two universal constructions for maps of finite groups: One,  $\Gamma\to \Gamma_\infty\to G, $  is {\bf initial} among all  subnormal factorizations  i.e. $\Gamma\to S\to G$  with $S\to G$ a {\em subnormal map;} the other,
 $\Gamma\to U_\infty\to G$ is  {\bf terminal } among all factorization with $H_{i\leq 2}(-,\Gamma)=0.$ Both have trivial relative
homology groups $H_{*\leq 2}(-,\Gamma)\cong 0.$ Note that in  the  second construction above, $U_\infty,$ it is assumed that the map $\Gamma\to G$ is surjective on the  abelian quotients.

However, one may well start with an arbitrary map $f:\Gamma\to G$  and proceeds    by first taking   the maximal normal  subgroups  $B\subseteq  G$ containing the image of $f$  for which the map $f_B:\Gamma \to B$ is surjective on the abelian quotient. such a maximal subgroup
exists for any map in a natural way. In this way the universal  $f_B$ hypercentral extension  $\Gamma\to U_B\to B$ gives   a subnormal map $U_B\to G$ factorizing $f$ in the desired way. Thus both $\Gamma_\infty $ and $U_\infty:=U_B$ (by definition for an arbitrary map) are  defined for arbitrary maps of finite groups and both
give  subnormal factorizations in the sense of \cite{FS}; and now again  $\Gamma_\infty$ is initial and $U_\infty$ as defined here is terminal.
Since $U_\infty$ is subnormal by construction being a hypercentral extension of a normal subgroup $B\unlhd G,$ there is a unique natural map
$\Gamma_\infty\to U_\infty $ commuting with the factorization diagrams. Dually this map exists because of the  initial property of $U_\infty$
 as above.  For example, if we take $G=P$ a perfect group and $\Gamma=1$ the trivial
group then $\Gamma_\infty=1$ and $U_\infty=E$ the universal Schur extension of $P.$


\end{document}